\tikzset{
    every node/.style={font=\sffamily\small},
    main node/.style={thick,circle,draw,font=\sffamily,  inner sep = 3pt, minimum size = 10pt}
}
\theoremstyle{plain}
\newtheorem{thm}{\protect\theoremname}
\theoremstyle{plain}
\newtheorem{lem}[thm]{\protect\lemmaname}
\date{}
\providecommand{\lemmaname}{Lemma}
\providecommand{\theoremname}{Theorem}
\begin{document}
\title{The Toucher-Isolator Game on Trees}
\author{Eero R\"{a}ty\thanks{Centre for Mathematical Sciences, Wilberforce Road, Cambridge CB3
0WB, UK, epjr2@cam.ac.uk}}
\maketitle
\begin{abstract}
Consider the following Maker-Breaker type game played by Toucher and
Isolator on the edges of a graph $G$ with first move given to Toucher.
The aim of Isolator is to maximise the number of vertices which are
not incident to any edges claimed by Toucher, and the aim of Toucher
is  to minimise this number. Let $u\left(G\right)$ be the number
of isolated vertices when both players play optimally. Dowden, Kang,
Mikala\v{c}ki and Stojakovi\'{c} proved that $\left\lceil \frac{n+2}{8}\right\rceil \le u\left(T\right)\leq\left\lfloor \frac{n-1}{2}\right\rfloor $,
where $T$ is a tree with $n$ vertices. The author also proved that $u\left(P_{n}\right)=\left\lfloor \frac{n+3}{5}\right\rfloor$
for all $n\geq3$, where $P_{n}$ is a path with $n$ vertices

The aim of this paper is to improve the lower bound to $u\left(T\right)\geq\left\lfloor \frac{n+3}{5}\right\rfloor$,
which is sharp. Our result may be viewed as saying that paths are the 'best' for Isolator among trees with a given number of vertices. 
\end{abstract}

\section{Introduction}

The following game, called 'Toucher-Isolator' game on a graph $G$,
was introduced by Dowden, Kang, Mikala\v{c}ki and Stojakovi\'{c} \cite{key-7}.
The two players, Toucher and Isolator claim edges of $G$ alternately
with Toucher having the first move. We say that a vertex is \emph{isolated} if it is not 
incident to any of the edges claimed by Toucher at the end of the game. The aim of 
Toucher is to minimise the number of isolated vertices and the aim of Isolator is to 
maximise the number of isolated vertices. We set $u\left(G\right)$ to be the number
of isolated vertices at the end of the game when both players play optimally. Hence this
is a 'quantitative' Maker-Breaker type game. 

Dowden, Kang, Mikala\v{c}ki and Stojakovi\'{c} gave bounds \cite{key-7} for
the size of $u\left(G\right)$ for general graphs, and they studied
some particular examples. In particular, they focused on the case
when $T$ is a tree and gave bounds for $u\left(T\right)$ in terms
of the degree sequence of $T$. They also proved that if $T$ is a
tree with $n$ vertices, then 
\begin{equation}
\frac{n+2}{8}\leq u\left(T\right)\leq\frac{n-1}{2}.\label{eq:1}
\end{equation}
If $T$ is a star with $n\geq3$ vertices, it is easy to verify that
$u\left(T\right)=\left\lfloor \frac{n-1}{2}\right\rfloor $ regardless
of how Toucher and Isolator play the edges. Hence the upper bound
in (\ref{eq:1}) is tight. 

Let $P_{n}$ be the path with $n$ vertices. For paths, Dowden, Kang,
Mikala\v{c}ki and Stojakovi\'{c} improved their general bound (\ref{eq:1})
to

\[
\frac{3}{16}\left(n-2\right)\leq u\left(P_{n}\right)\leq\frac{n+1}{4},
\]
and they suggested that $u\left(P_{n}\right)$ could asymptotically
grow as $\frac{n}{5}$. The author \cite{key-9} verified this by
proving an exact result which states that $u\left(P_{n}\right)=\left\lfloor \frac{n+3}{5}\right\rfloor $
for all $n\geq3$. 

The aim of this paper is to improve the lower bound in (\ref{eq:1}),
and in particular we prove that paths are the 'best' for Isolator
among the trees with $n$ vertices. 
\begin{thm} \label{thm:1}
Let $n\geq3$ and let $T$ be a tree with $n$
vertices. Then $u\left(T\right)\geq\left\lfloor \frac{n+3}{5}\right\rfloor $. 
\end{thm}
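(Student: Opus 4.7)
The plan is to prove Theorem \ref{thm:1} by strong induction on $n$. For small values (say $n \le 7$) the bound $\lfloor (n+3)/5 \rfloor$ is at most $2$, and since there are only finitely many trees of each such size the base cases reduce to a direct verification, which can in part be read off from the known identity $u(P_n) = \lfloor (n+3)/5 \rfloor$ for paths together with an ad hoc check of the few remaining non-path trees.

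For the inductive step with $n \ge 8$, the idea is to locate a small local substructure $S \subseteq T$ (involving roughly five vertices beyond an attachment point) in which Isolator can guarantee at least one isolated vertex, and then apply induction to a subtree $T'$ with $|V(T')| \ge n-5$ to obtain a further $\lfloor (n+3)/5 \rfloor - 1$ isolated vertices. To find such an $S$, I would take a longest path $v_0 v_1 \cdots v_d$ in $T$ and examine its ends. Maximality of the path forces every neighbour of $v_1$ distinct from $v_2$ to be a leaf, which gives a clean local picture. A case split on $\deg(v_1)$ and $\deg(v_2)$ then yields either a branching vertex attached to several leaves (where a simple pairing of edges guarantees an isolated leaf), or a short pendant arm of length up to about five that resembles the end of a path and can be handled by adapting the strategy of \cite{key-9}. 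In each case one should verify that Isolator nets at least one guaranteed isolated vertex while the combined ``budget'' of vertices consumed from $T$ is at most five.

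The main difficulty will be weaving together the local strategy in $S$ with the global inductive strategy on $T'$. When Toucher plays an edge of $S$, Isolator must respond locally to protect his target isolated vertex; when Toucher plays an edge of $T \setminus S$, Isolator must respond according to the inductive strategy for the game on $T'$. The subtle point is the attachment vertex shared by $S$ and $T'$: edges incident to it appear in both regions, and one must avoid situations in which a threat in one region forces Isolator to surrender progress in the other. Making this pairing argument robust across all the degree/configuration cases arising from the longest-path analysis, and in particular confirming that the local arm always yields a genuine net gain of one isolated vertex per five vertices removed even when Toucher plays asymmetrically, is likely to be the technical heart of the proof.
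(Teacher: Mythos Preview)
Your proposal identifies the right intuition---peel off a local piece worth one isolated vertex per five vertices---but the inductive framework as stated has a genuine gap that is not merely technical.

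The problem is the attachment vertex $w$ shared by your local piece $S$ and the residual tree $T'$. Your pairing strategy (respond in $S$ when Toucher plays in $S$, respond via the inductive strategy when Toucher plays in $T'$) can at best guarantee $u(T')$ vertices that are isolated \emph{with respect to the edge set of $T'$}. But $w$ carries at least one edge into $S$, so if the optimal strategy on $T'$ happens to count $w$ among its isolated vertices, that vertex need not be isolated in $T$: Toucher may claim the crossing edge, and nothing in your local strategy forces Isolator to take it. The local gain of $+1$ is then exactly cancelled, and the induction delivers only $\lfloor (n+3)/5\rfloor - 1$. Crucially, the hypothesis $u(T')\ge \lfloor(|T'|+3)/5\rfloor$ gives no control over \emph{which} vertices get isolated, so you cannot assume $w$ is not among them. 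This is not a matter of playing more carefully near the boundary; it is a deficiency of the inductive statement itself.

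The paper's route is precisely to strengthen the inductive statement to absorb this boundary loss. It introduces a delayed game $F(T,C,X)$ in which a set $C$ of edges is pre-assigned to Toucher and a set $X$ of vertices is declared non-scoring, and proves (Lemma~\ref{lem:Lemma 4}) a bound of the shape
\[
\alpha(T,C,L)\ \ge\ \left\lfloor \frac{n-3|L|-3|C|+7+\sum_{v\in O}(d(v)-2)}{5}\right\rfloor,
\]
with $X=L$ the set of leaves. Marking boundary vertices as already touched transfers the loss into the parameters $|L|$ and $|C|$ rather than into the count of isolated vertices, and the extra $\sum_{v\in O}(d(v)-2)$ term is needed to make the many local reductions close. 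The actual game is then split into a first phase (Isolator greedily isolates available leaves, Lemma~\ref{lem:Lemma 3}) followed by this delayed game. Without some device of this kind---an inductive hypothesis carrying enough extra parameters to neutralise attachment vertices---a direct induction on $u$ alone cannot close.
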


Indeed, since $u\left(P_{n}\right)=\left\lfloor \frac{n+3}{5}\right\rfloor $
for all $n\geq3$, it follows that $u\left(T\right)\geq u\left(P_{n}\right)$
for any $T$ with $n$ vertices. In general, it is easy to see that
the lower bound is not uniquely attained by a path. Indeed, let $S_{n}$
be a tree consisting of a path with $n-1$ vertices together with
a leaf joined to the second vertex on the path. It is easy to prove
by following a similar argument that was used for paths in \cite{key-9}
that we also have $u\left(S_{n}\right)=u\left(P_{n}\right)$.

The proof of Theorem \ref{thm:1} follows an approach that is similar to 
the proof of lower bound of $u\left(P_{n}\right)$ in \cite{key-9}. For general background 
on Maker-Breaker type games, see Beck \cite{key-2}. There are many other 
papers dealing with achievement games on graphs, see 
e.g.\ \cite{key-1,key-3,key-8}. 

We now outline the proof of Theorem 1 as it is rather lengthy. As in \cite{key-9}, 
the game naturally splits into two phases. 
At the early stages of the game it seems natural for
Isolator to claim an edge whose endpoint is a leaf, as claiming such
an edge instantly increases the score by one. Note that after claiming
such an edge, the isolated leaf in a sense becomes 'useless' for the
rest of the game, and hence it may be deleted (together with the edge
that was occupied). Note that during the process the other endpoint
of the edge may become a leaf. We say that the game is in the \textit{first
phase} as long as such an edge always exists, and once such edge no
longer exists the game moves to the \textit{second phase}. In particular,
the game will not return to the first phase even though such an edge would
become available later. 

Suppose that at some point Isolator has no such move available. Let
$T_{1}$ be the tree obtained as a result of the process, and let $C_{1}$ be
the set of edges claimed by Toucher. Thus for every leaf $v\in T_{1}$
there exists an edge $e\in C_{1}$ whose endpoint $v$ is. This is
quite similar to the delayed game introduced in \cite{key-9}, in
which Toucher is allowed to claim a certain number of edges at the
start of the game and in which isolating endpoints, which naturally
corresponds to leaves in our general case, do not increase the score. 

However, before we can define a delayed game  that is good enough
for our purposes, the structure of $T_{1}$ and $C_{1}$ may need
to be modified. Let $v$ be a leaf in $T_{1}$ whose unique neighbour
is $w$. Similarly to leaves that are already isolated, the leaves
that are already touched are quite useless for the rest of the game,
so it would be tempting to just delete them. However, during the 
process we must make sure that we keep in mind that $w$ is already
touched, even though the edge $vw$ is deleted during the process. Thus
it will be convenient to declare a set $X_{1}\subseteq V\left(T_{1}\right)$
of 'additional' touched vertices at the start of the delayed
game. 

It turns out that such a simple deletion is fine whenever $d_{T_{1}}\left(w\right)=2$,
but when $d_{T_{1}}\left(w\right)\geq3$ we need to modify the structure
of $T_{1}$ in a slightly different way. In this case the modification
is based on the observation that for a given edge $e=vw$, if $w$
is a touched vertex then $e$ can be replaced with an edge $vw'$
for any touched vertex $w'$ without changing the game too much. By
using this observation, we may restrict ourselves to those delayed
games where $X_{1}$ is exactly the set of all leaves in $T_{1}$. 

The plan of the paper is following. In Section 2 we define the notion that allows us 
to reduce the tree without changing the game too much, and we deal
with the first phase of the case in which Isolator is claiming only
leaves. In Section 3 we move on to analysing the specific delayed
version of the game where leaves are not counted for isolated vertices,
and Toucher is allowed to claim a certain number of edges at the start
of the game. We then use these delayed games to deal with the second
phase of the game. 

\section{The First phase of the game}

We start by introducing the notion of the delayed game. Let $T$ be a
tree, let $C$ and $D$ be disjoint subsets of the edges of $T$,
and let $X$ be a subset of the vertices of $T$. Define the \textit{delayed
game} $F\left(T,C,D,X,s\right)$ to be the Toucher-Isolator game played
on the edges of $T$, with the edges in $C$ and $D$ given to Toucher
and Isolator respectively at the start of the game, and with both
players claiming the edges in alternating turns with the first move given
to the player specified by the parameter $s\in\left\{ \text{i},\text{t}\right\} $.
Define the \textit{score} of this game to be the number of isolated
vertices in $V\left(T\right)\setminus X$ at the end of the game,
and denote the score by $\alpha\left(T,C,D,X,s\right)$. 

For our purposes, we mostly focus on certain sub-classes of these
games, and hence some of the parameters can be omitted as they will
be clear from the context. First of all, we use $F\left(T\right)$
to denote the ordinary Toucher-Isolator game on $T$, that is $F\left(T,\emptyset,\emptyset,\emptyset,\text{t}\right)$,
and similarly we use $\alpha\left(T\right)$ to denote the score of
$F\left(T\right)$. However, apart from this special case, it is more
convenient to choose Isolator to start the delayed version of the
game, and hence $s$ should be taken to be Isolator if it is omitted
from the notation, with $F\left(T\right)$ being an exception. Similarly
$C$ and $D$ should be taken to be empty sets if they are omitted
from the notation. We often either have $X=\emptyset$ or $X=L$,
where $L$ denotes the set of leaves in $T$. Hence we use $F\left(T,C,X\right)=F\left(T,C,\emptyset,X,\text{i}\right)$,
$F\left(T,C\right)=F\left(T,C,\emptyset,\emptyset,\text{i}\right)$
and $F\left(T,C,L\right)=F\left(T,C,\emptyset,L,\text{i}\right)$
to simplify our notation. 

Since some of the results used in the proof of Theorem \ref{thm:1}
are proved by induction, it is convenient to introduce a suitable
reduction operation that allows us to reduce the tree without increasing
the score of the game. Our reduction operator is defined for the games
of the form $F\left(T,C,D,X,s\right)$, and in general $s$ is taken
to be Isolator. 

First we need some notation. As usual, let $E$ and $V$ denote the
set of edges and vertices in $T$ respectively, and let $C$, $D$
and $X$ be defined as before. Let $\hat{E}=E\setminus\left(C\cup D\right)$
be the set of edges that are not given to Toucher or Isolator at the
start of the game, let $I$ be the set of vertices in $V\setminus X$
that are isolated by the edges in $D$, and let $O$ be the set of
vertices in $V\setminus X$ that are touched by an edge in $C$. The
vertices in $O$ are called \textit{occupied} and the vertices in
$O\cup X$ are called \textit{touched}. Finally we set $U=V\setminus\left(I\cup X\cup O\right)$,
and the vertices in $U$ are called \textit{unoccupied}. Note that
the set of vertices that are not yet isolated nor touched is $U$,
and hence $U$ is exactly the set of those vertices that could be
still isolated. 

The definition of the reduction operation is quite tedious, but the
ideas behind it are fairly simple, and we start by outlining these
ideas. Suppose that $v_{1}$ and $v_{2}$ are two touched vertices
and let $e$ be an edge of the form $uv_{1}$ that is not in $C$.
Let $T_{1}$ be the graph obtained by replacing the edge $uv_{1}$
with $uv_{2}$ in $T$, and suppose that $T_{1}$ is also a tree.
This operation changes the structure of $T$, but does not affect
the game at all. First of all, note that the process might only affect
the vertices $u$, $v_{1}$ and $v_{2}$. Note that in both $T$ and
$T_{1}$ the vertices $v_{1}$ and $v_{2}$ are already touched and hence they are not 
affected during the process. Also only one edge with $u$ as its endpoint is affected during the process
and in both $T$ and $T_{1}$ the other endpoint is touched. Note that it does
not matter which particular vertex the other endpoint is, as long
as in both cases the other endpoint is touched. Hence in fact the
game is not affected at any vertex during the process. One can also
perform similar operations to leaves that are endpoint of an edge
in $C$ and whose neighbour has degree at least $3$. 

Recall that $\hat{E}=E\setminus\left(C\cup D\right)$. Let $e\in\hat{E}$,
and note that hence neither of the endpoints of $e$ is in $I$. Define
the \textit{endpoint pattern} of $e$ to be $P\left(e\right)\in\left\{ 1,2,3\right\} $,
where $P\left(e\right)=1$ if both endpoints of $e$ are unoccupied,
$P\left(e\right)=2$ if one of the endpoints is occupied and the other
is touched and $P\left(e\right)=3$ if both endpoints are touched.
Let $T_{1}$ and $T_{2}$ be trees with appropriate sets $C_{i},\,D_{i}$
and $X_{i}$. We say that a function $f:\hat{E}\rightarrow\hat{E}_{2}$
\textit{preserves the type of the endpoints} if for every $e\in\hat{E}_{1}$,
$e$ and $f\left(e\right)$ have the same endpoint pattern. Finally
for a vertex $v\in V\left(T\right)$ define ${\cal E}\left(v\right)$
to be the collection of edges whose endpoint $v$ is. 

We say that $F\left(T_{1},C_{1},D_{1},X_{1},s\right)$ is a \textit{reduction}
of $F\left(T_{2},C_{2},D_{2},X_{2},s\right)$ if $D_{1}=\emptyset$
and if there exist injections $f_{E}:\hat{E}_{1}\rightarrow\hat{E}_{2}$
and $f_{V}:U_{1}\rightarrow U_{2}$ so that $f_{E}$ preserves the
type of the endpoints and we have ${\cal E}\left(f_{V}\left(v\right)\right)=f_{E}\left({\cal E}\left(v\right)\right)$
for all $v\in U_{1}$, where $f_{E}\left(A\right)=\bigcup_{e\in A}f_{E}\left(a\right)$
for $A\subseteq\hat{E}_{1}$. The first condition is intuitively clear
and the second condition implies that the neighbourhood of an unoccupied
vertex is preserved, which is crucial as we want isolating new vertices
to be a similar process in both $T_{1}$ and $T_{2}$. For convenience,
we just say that $T_{1}$ is a reduction of $T_{2}$ if $F\left(T_{1},C_{1},D_{1},X_{1},s\right)$
is a reduction of $F\left(T_{2},C_{2},D_{2},X_{2},s\right)$, as the
other parameters are clear from the context. 

In all of our applications, $T_{1}$ is obtained by deleting some
vertices from $T$ or by changing endpoints of several edges. If only
deletion of vertices is used in the process, we usually take $f_{E}$
and $f_{V}$ to be the identity maps. For convenience, if $f_{E}$
and $f_{V}$ are taken to be the identity maps we simply say that
$T_{1}$ is a reduction of $T$ (without explicitly specifying that
the maps are taken to be identity maps). If endpoints of some edges
are changed, we often still take $f_{V}$ to be the identity map and
we take $f_{E}\left(e\right)=e$ for most of the edges, apart from
several exceptions involving the edges whose endpoints were changed.
In such a case we specify the map $f_{E}$ only on these exceptional
edges, and for any unspecified $v\in U_{1}$ and $e\in\hat{E}_{1}$
one should take $f_{V}\left(v\right)=v$ and $f_{E}\left(e\right)=e$.

Our first aim is to prove that such reduction operation cannot increase
the score, when the effect of those vertices that are isolated already
is taken into account. This essentially follows by copying the strategy
on $T_{1}$ to a strategy on $T_{2}$ by using the function $f_{E}$. 
\begin{lem}
\label{lem:Lemma 2}Let $T_{1}$ and $T_{2}$ be trees with appropriate
sets $C_{i},\,D_{i}$ and $X_{i}$ with $D_{1}=\emptyset$, and suppose
that $T_{1}$ is a reduction of $T_{2}$. Let $I_{2}$ be the set
of isolated vertices in $T_{2}$. Then $\alpha\left(T_{2},C_{2},D_{2},X_{2},s\right)\geq\left|I_{2}\right|+\alpha\left(T_{1},C_{1},D_{1},X_{1},s\right)$.
\end{lem}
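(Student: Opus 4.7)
The proof decomposes the target score into two contributions: the $|I_2|$ vertices in $I_2$, which are isolated automatically since every edge incident to such a vertex lies in $D_2$ and is therefore never played by Toucher; and a further $\alpha_1 := \alpha(T_1, C_1, D_1, X_1, s)$ vertices in $f_V(U_1) \subseteq U_2$, obtained by copying an optimal Isolator strategy for $T_1$ into $T_2$ through $f_E$.

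The plan for the second contribution is to have Isolator, alongside the real game on $T_2$, maintain a parallel 'virtual' game $F(T_1, C_1, D_1, X_1, s)$, coupled to the real game by $f_E$ as follows: whenever an edge $e \in f_E(\hat{E}_1)$ is played in the real game, the same player plays $f_E^{-1}(e)$ in the virtual game, while plays on the 'extra' edges $A := \hat{E}_2 \setminus f_E(\hat{E}_1)$ do not advance the virtual game. The reduction condition ${\cal E}(f_V(v)) = f_E({\cal E}(v))$ guarantees that for every $v \in U_1$, Toucher touches $f_V(v)$ in the real game precisely when Toucher touches $v$ in the virtual game; hence each virtual-isolated $v \in U_1$ yields a real-isolated $f_V(v) \in U_2$, and injectivity of $f_V$ makes these $\alpha_1$ real-isolated vertices distinct and disjoint from $I_2 \cup X_2$.

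The concrete strategy for Isolator: on her real turn, if the virtual game awaits an Isolator move she plays $f_E(e)$, where $e$ is produced by an optimal virtual-strategy $\sigma_1$; otherwise (the virtual game awaits Toucher, which can only happen after an unmatched Toucher play in $A$) she plays any still-available edge in $A$. The main obstacle is to verify this is always executable---specifically, that Isolator always has an $A$-edge available when she needs one. A parity count comparing the numbers of Isolator and Toucher turns in the real and virtual games shows that Isolator's required $A$-plays match Toucher's $A$-plays up to an offset of at most $1$, dictated by the parities of $|\hat{E}_1|$ and $|\hat{E}_2|$ together with the starting parameter $s$. Any such offset can be absorbed by letting Isolator schedule an $A$-edge as her opening (or final) move; if this ever forces her to play an $A$-edge while the virtual game still awaits Toucher, we interpret it as Toucher taking a 'virtual pass', which only reduces Toucher's impact and hence preserves the guarantee of $\sigma_1$. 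Combining the two contributions yields $\alpha(T_2, C_2, D_2, X_2, s) \geq |I_2| + \alpha(T_1, C_1, D_1, X_1, s)$, as required.
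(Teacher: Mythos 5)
Your proposal is correct and follows essentially the same route as the paper: copy an optimal Isolator strategy from $T_1$ to $T_2$ via $f_E$, use the condition ${\cal E}\left(f_V\left(v\right)\right)=f_E\left({\cal E}\left(v\right)\right)$ together with injectivity of $f_V$ to transfer each virtually isolated vertex of $U_1$ to a distinct really isolated vertex of $U_2$, and add the $\left|I_2\right|$ vertices already isolated by $D_2$. The only difference is bookkeeping: where you let the virtual game stall on off-image Toucher moves and repair the parity with a ``virtual pass'', the paper simply assigns Toucher an arbitrary edge of $T_1$ after every such move, which keeps the two games in lockstep and avoids having to justify that an optimal strategy survives a Toucher pass.
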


\begin{proof}
Let $S_{1}$ be a strategy on $T_{1}$ which guarantees that Isolator
can isolate at least $\alpha\left(T_{1},C_{1},D_{1},X_{1},s\right)$
vertices. Consider the strategy $S_{2}$ on $T_{2}$ obtained as follows.
If on her move Toucher claims an edge $e\in\hat{E}_{2}$ for which
$e$ is in the image of $f_{E}$, then the edge $f_{E}^{-1}\left(e\right)$
is assigned to Toucher on $T_{1}$. If she claims an edge $e\in\hat{E}_{2}$
that is not in the image of $f_{E}$, then an arbitrary edge is assigned
to Toucher on $T_{1}$. On a given turn, if Isolator claims
an edge $g\in\hat{E}_{1}$ according to the strategy $S_{1}$, then
on $T_{2}$ she claims the edge $f_{E}\left(g\right)$. Once all the
edges on $T_{1}$ are occupied, Isolator always plays an arbitrary
edge on $T_{2}$ on her move. 

By following this strategy, at the end of the game Isolator has isolated
$\alpha\left(T_{1},C_{1},D_{1},X_{1},s\right)$ vertices on $T_{1}$.
Since for all $v\in U_{1}$ we have $f_{E}\left({\cal E}\left(v\right)\right)={\cal E}\left(f_{V}\left(v\right)\right)$,
it follows that for each isolated vertex $v\in U_{1}$ the appropriate
vertex $f_{V}\left(v\right)$ is also isolated, and all of these vertices
are distinct as $f_{V}$ is an injection. In addition, all the vertices
in $I_{2}$ are isolated as well by definition, and note that $I_{2}\cap U_{2}=\emptyset$.
Hence it follows that $\alpha\left(T_{2},C_{2},D_{2},X_{2},s\right)\geq\left|I_{2}\right|+\alpha\left(T_{1},C_{1},D_{1},X_{1},s\right)$,
as required. 
\end{proof}
Recall that we start the game in the first phase, and after
a given move of Toucher the game remains in the first phase if there
exists an unoccupied vertex $v$ for which ${\cal E}\left(v\right)$
contains exactly one edge that is not already claimed by Isolator.
Otherwise the game moves to the second phase, and note that
this transition always occurs after Toucher's move. In particular,
the game is in the first phase as long as Toucher can increase her
score on every move by claiming a suitable edge - and it turns out
that choosing an arbitrary edge among all such edges will work for
Isolator. 

Let $C$ and $D$ be the set of edges occupied by Toucher and Isolator
when the game moves from the first phase to the second phase. Recall that
for a tree $T$ we write $L$ for the set of leaves in $T$. Our first
aim is to show that there exists a reduction $T'$ of $T$ with $D'=\emptyset$,
$X'=L'$ and for which $\left|T'\right|-3\left|C'\right|-3\left|L'\right|$
is not too small. This is done in Lemma \ref{lem:Lemma 3}. Note that
the game on $T'$ is exactly the delayed game $F\left(T',C',L'\right)$,
as $X'=L'$, $D'=\emptyset$ and since Isolator has the first move
in the second phase. Thus in order to analyse the second phase, we
need a lower bound for $\alpha\left(T,C,L\right)$. In Lemma \ref{lem:Lemma 4}
we prove a lower bound for $\alpha\left(T,C,L\right)$ that depends
on $\left|T'\right|-3\left|C'\right|-3\left|L'\right|$.
\begin{lem}
\label{lem:Lemma 3}Let $T$ be a tree with $n\geq3$ vertices. Suppose
that Isolator has the move, the game is in the first phase and let
$Y$ be the set of those edges $v$ in $E$ that are not played yet
for which Isolator can isolate a new vertex by claiming $v$ on this
move. 

Suppose that on each of her move Isolator claims an arbitrarily chosen
edge from $Y$. Let $r$ be the number of edges Isolator claims during
the first phase, and let $C$ and $D$ be the set of edges claimed
by Toucher and Isolator at the end of the first phase. Then there
exists a reduction $T'$ of the game $F\left(T,C,D\right)$ with $X'=L'$,
$D'=\emptyset$ and $\left|T'\right|-3\left|L'\right|-3\left|C'\right|\geq\left|T\right|-5r-4$. 
\end{lem}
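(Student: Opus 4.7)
The plan is to trace the first phase move-by-move, compress the result by a sequence of touched-leaf deletions (with rewirings to handle branchings), and bound the total loss in the potential $\Psi(T,C,L)=|T|-3|L|-3|C|$ by $5r+4$.

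First I fix the counting for phase 1. Since Isolator plays from $Y$ by hypothesis and the phase ends precisely after a Toucher move, the moves alternate $T,I,T,I,\ldots,T$ with $r$ Isolator moves and $r+1$ Toucher moves, so $|D|=r$ and $|C|=r+1$. Each Isolator move isolates a leaf $v$, which is thereafter irrelevant to the game; deleting $v$ together with $vw$ after each such move is a valid reduction step yielding an intermediate tree $T_1$ on $n-r$ vertices with $D_1=\emptyset$ and $C_1\subseteq E(T_1)$ of size $r+1$. Because phase 1 transitioned to phase 2, every leaf of $T_1$ is incident to a $C_1$-edge, and since $n\geq 3$ forces $|T_1|\geq 3$ (so no two leaves of $T_1$ are adjacent), these leaf-edges are distinct and $|L_1|\leq|C_1|=r+1$.

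Second, I reduce $T_1$ further to a tree $T'$ with $X'=L'$ and $D'=\emptyset$. The basic operation is touched-leaf removal: given a touched leaf $v$ with neighbour $w$ (so $vw\in C_1$), delete $v$ and $vw$. If $d_{T_1}(w)=2$, then $w$ becomes a new leaf and adding $w$ to $X$ preserves the game, since the deleted $C$-edge and the $X$-membership both mark $w$ as touched; hence all remaining endpoint patterns are unchanged and the identity maps form a reduction. If $d_{T_1}(w)\geq 3$, I first invoke the rewiring observation from Section 2: take a non-$C$ edge $wu$ and re-attach its $w$-endpoint to another touched leaf, which reduces $\deg(w)$ by one while preserving every endpoint pattern. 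Iterating rewirings reduces the case to $d(w)=2$, enabling deletion; the base case $T_1\cong K_2$ is handled directly. On completion every leaf of $T'$ lies in $X'$ and no internal vertex lies in $X'$, so $X'=L'$ and $D'=\emptyset$ as required.

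Third, I track $\Psi$. A $d(w)=2$ deletion has $\Delta|T|=-1$, $\Delta|L|=0$ (losing $v$, gaining $w$), and $\Delta|C|=-1$, so $\Delta\Psi=+2$. Choosing the rewiring target to be a touched leaf ensures rewirings do not decrease $\Psi$. Starting from $\Psi(T_1,C_1,L_1)=(n-r)-3|L_1|-3(r+1)\geq n-7r-6$ and performing $r+1$ deletions---one absorbing each Toucher edge, with rewirings used to unblock branchings---the net gain is at least $2(r+1)$, giving $\Psi(T')\geq n-7r-6+2(r+1)=n-5r-4$, as required.

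The delicate part will be verifying that all $r+1$ touched-leaf removals can indeed be carried out: when $d_{T_1}(w)\geq 3$, one must show that a suitable non-$C$ edge at $w$ and a touched leaf elsewhere to serve as rewiring target always exist, that the rewired graph is still a tree, and that the resulting composite map $(f_E,f_V)$ satisfies the formal conditions of Section 2---injectivity and endpoint-pattern preservation. Handling configurations where many Toucher edges concentrate at branching vertices, together with the small-tree base cases, constitutes the technical heart of the argument.
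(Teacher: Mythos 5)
Your overall architecture matches the paper's: delete the vertices isolated during phase 1, then repeatedly remove touched leaves, rewiring edges at branch vertices onto touched vertices so that removal is always possible, while tracking a potential of the form $|T|-3|\cdot|-3|C|$. The gap is in the accounting, and it is exactly the point you flag as ``the technical heart'' but do not resolve: you cannot guarantee $r+1$ touched-leaf deletions, and your chain of inequalities collapses without them. Concretely, your bound is $\Psi(T')\ge\Psi(T_1)+2D$, where $D$ is the number of deletions actually performed, together with $\Psi(T_1)\ge n-7r-6$ via $|L_1|\le r+1$. To reach $n-5r-4$ you need $2D\ge 3|L_1|-(r+1)$, i.e.\ $D=r+1$ in the extremal case $|L_1|=r+1$. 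But the process can terminate with every leaf in $X$ while Toucher edges remain unabsorbed. For example, let $T_1$ be a spider with centre $c$, legs $c-x_i-y_i$ for $i\le p-1$ and one short leg $c-z$, with $C_1=\{x_iy_i\}\cup\{cz\}$, so $r=p-1$ and $|L_1|=r+1$. Deleting the $y_i$ gives only $D=r$ deletions; the leaf $z$ cannot be deleted until $c$ has degree $2$, and your rewirings at $c$ turn touched leaves into internal vertices without ever absorbing $cz$. Your stated bound then yields $\Psi(T')\ge n-7r-6+2r=n-5r-6$, two short of the target. (The lemma still holds in this instance because each rewiring that de-leafs a touched leaf gains $+3$ in your potential via $\Delta|L|=-1$, but you only credit rewirings with ``not decreasing $\Psi$,'' which is insufficient.)

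The paper sidesteps this by using the potential $g=|T|-3|X|-3|C|$ with $X$ the set of declared-touched vertices rather than $L$. Since $X_0=\emptyset$, the starting value is exactly $g_0=(n-r)-3(r+1)=n-4r-3$, with no $|L_1|$ term to estimate; each leaf-deletion with $d(w)=2$ then loses at most $1$ (because $|X|$ may grow by one), and the number of such steps is at most $r+1$ because each consumes a $C$-edge. That accounting is robust to the process stopping early with leftover Toucher edges. If you insist on the $L$-based potential, you would instead need to show that every original touched leaf is eventually either deleted (gaining $2$) or de-leafed by a rewiring (gaining $3$), so that the total gain is at least $2|L_1|$ rather than $2(r+1)$; combined with $|L_1|\le r+1$ this recovers the bound, but it is a different argument from the one you wrote. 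A secondary point: the paper rewires the branch edge $wv_1$ onto the touched leaf $v$ currently being processed, which is adjacent to $w$, so the result is automatically a tree and no remote ``touched leaf elsewhere'' is needed; your version must additionally justify that such a target exists and that the rewired graph stays acyclic.
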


\begin{proof}
Let $C=\left\{ e_{1},\dots,e_{r+1}\right\} $ and $D=\left\{ f_{1},\dots,f_{r}\right\} $
be the set of edges claimed by Toucher and Isolator respectively at
the end of the first phase, ordered in a way that $f_{i}$ is claimed
before $f_{j}$ for $i<j$. Let $v_{i}$ be a vertex isolated by claiming
the edge $f_{i}$. 

We start by verifying that $T\setminus\left\{ v_{1},\dots,v_{i}\right\} $
is a tree for all $i$ and that claiming $f_{i}$ cannot isolate
both of its endpoints. Indeed, note that $v_{1}$ must be leaf, and
since $n\geq3$ it follows that no two leaves can be neighbours.
Hence the claim is true when $i=1$. If the claim is true for all
$1\leq j\leq i$ for some $i$, it follows that $T\setminus\left\{ v_{1},\dots,v_{i}\right\} $
is a tree which does not contain any isolated vertices. Note that
it also does not contain any edge claimed by Isolator on her first
$i$ moves, as for every edge $f_{1},\dots,f_{i}$ at least one of
the endpoints is deleted during the process. Thus $v_{i+1}$ must
be a leaf in $T\setminus\left\{ v_{1},\dots,v_{i}\right\} $, and
note that the unique neighbour of $v_{i+1}$ cannot be a leaf. Indeed,
this follows from the fact that $T\setminus\left\{ v_{1},\dots,v_{i}\right\} $
contains an edge claimed by Toucher, and hence it contains at least
$3$ vertices. Thus both claims follow by induction. 

Let $T'$ be the tree obtained by deleting the vertices $v_{1},\dots,v_{r}$,
and note that $C\subseteq E\left(T'\right)$ as none of the vertices
$v_{1},\dots,v_{r}$ is touched. Hence it follows that $T'$ with
$C'=C$, $D'=\emptyset$ and $X'=\emptyset$ is a reduction of $T$.
Our aim is to construct a suitable sequence of reductions $T_{0},\dots,T_{t}$
for some $t$ with $T_{0}=T'$ and so that $T_{t}$ satisfies $\left|T_{t}\right|-3\left|L_{t}\right|-3\left|C_{t}\right|\geq\left|T\right|-5r-4$.
For each $i$ define $g_{i}=\left|T_{i}\right|-3\left|X_{i}\right|-3\left|C_{i}\right|$. 

The sequence $T_{0},\dots,T_{t}$ is obtained as follows. First of
all, we take $T_{0}=T'$. Given $T_{i-1}$ together with appropriate
sets satisfying $X_{i-1}\subseteq L_{i-1}\subseteq X_{i-1}\cup O_{i-1}$,
we stop the process if $X_{i-1}=L_{i-1}$. Otherwise, there exists
a leaf $v\in T_{i-1}$ with an unique neighbour $w$ satisfying $vw\in C_{i-1}$.
Indeed, this follows by observing that $X_{i-1}\subseteq L_{i-1}$
and $L_{i-1}\setminus X_{i-1}\subseteq O_{i-1}$. 

Given a leaf $v$ with $N\left(v\right)=\left\{ w\right\} $ and $vw\in C_{i-1}$,
we obtain $T_{i}$ as described by one of the cases below, and note
that one of them always occurs given $v$ and $w$ satisfying these
conditions. Note that by our earlier observation such a leaf $v$ certainly
exists in $L_{i-1}\setminus X_{i-1}$, but such a leaf may exist even
in $X_{i-1}$ in which case the reduction can be done as well. Hence
we allow both cases $v\in X_{i-1}$ and $v\not\in X_{i-1}$. We will verify that 
in every case we have $g_{i}\geq g_{i-1}-1$ and
that the property $X_{i}\subseteq L_{i}\subseteq X_{i}\cup O_{i}$
is preserved. \\

\textbf{Case 1. }$w$ satisfies $d_{T_{i-1}}\left(w\right)=2$. \\

Consider $T_{i}$ obtained by deleting the vertex $v$, and setting
$C_{i}=C_{i-1}\setminus\left\{ vw\right\} $, $D_{i}=\emptyset$ and
$X_{i}=\left(X_{i-1}\setminus\left\{ v\right\} \right)\cup\left\{ w\right\} $.
Note that $T_{i}$ is certainly a reduction of $T_{i-1}$ as $w$
is a touched vertex in both $T_{i-1}$ and $T_{i}$. We certainly have
$\left|C_{i}\right|=\left|C_{i-1}\right|-1$ and $\left|T_{i}\right|=\left|T_{i-1}\right|-1$.
Since we might have $v\not\in X_{i-1}$, it follows that $\left|X_{i}\right|\leq\left|X_{i-1}\right|+1$.
Hence we have $g_{i}\geq g_{i-1}-1$, and it is easy to see
that $X_{i}\subseteq L_{i}\subseteq X_{i}\cup O_{i}$. \\

\textbf{Case 2. }$w$ satisfies $d_{T_{i-1}}\left(w\right)\geq3$
and $\left|{\cal E}\left(w\right)\cap C_{i-1}\right|\geq2$. \\

Since $\left|{\cal E}\left(w\right)\cap C_{i-1}\right|\geq2$, it
follows that there exists an edge $uw\in C_{i-1}$ with $u\neq v$.
Consider $T_{i}$ obtained by deleting the vertex $v$, and setting
$C_{i}=C_{i-1}\setminus\left\{ vw\right\} $, $D_{i}=\emptyset$ and
$X_{i}=\left(X_{i-1}\setminus\left\{ v\right\} \right)$. As before,
this is a reduction of $T_{i-1}$ as $w$ is touched in both $T_{i-1}$
and $T_{i}$ since $uw\in C_{i}$. Since $d_{T_{i-1}}\left(w\right)\geq3$,
it follows that $w$ is not a leaf in $T_{i}$, and hence we have
$X_{i}\subseteq L_{i}\subseteq X_{i}\cup O_{i}$. It is easy to check
that $g_{i}\geq g_{i-1}+2$, as required. \\

\textbf{Case 3. }$w$ satisfies $d_{T_{i-1}}\left(w\right)\geq3$
and $\left|{\cal E}\left(w\right)\cap C_{i-1}\right|=1$. \\

Let $N_{T_{i-1}}\left(w\right)\setminus\left\{ v\right\} =\left\{ v_{1},\dots,v_{s}\right\} $.
Since $d_{T_{i-1}}\left(w\right)\geq3$ it follows that $s\geq2$,
and since $\left|{\cal E}\left(w\right)\cap C_{i-1}\right|=1$ it
follows that $wv_{j}\not\in C_{i-1}$ for all $j$. Consider $T_{i}$
obtained by replacing the edge $wv_{1}$ with $vv_{1}$, and by setting
$C_{i}=C_{i-1}$, $D_{i}=\emptyset$ and $X_{i}=X_{i-1}\setminus\left\{ v\right\} $.
Since $vw\in E_{i-1}$ and $T_{i-1}$ is a tree, it follows that $T_{i}$
does not contain a cycle and is connected, and hence $T_{i}$ is also
a tree. Again, $T_{i}$ is a reduction of $T_{i-1}$ by taking $f_{E}\left(wv_{1}\right)=vv_{1}$.
Indeed, this follows from the fact that both $v$ and $w$ are touched
vertices so mapping $wv_{1}$ to $vv_{1}$ satisfies the conditions
of reduction. 

Note that $L_{i}=L_{i-1}\setminus\left\{ v\right\} $, as the only
vertices whose degrees are affected are $v$ and $w$, and since $s\geq2$
it follows that $w$ is not a leaf in $T_{i}$. It is easy to see
that $\left|T_{i}\right|=\left|T_{i-1}\right|$, $\left|X_{i}\right|\leq\left|X_{i-1}\right|$
and $\left|C_{i}\right|=\left|C_{i-1}\right|$. Hence it follows that
$g_{i}\geq g_{i-1}$, and it is also easy to see that $X_{i}\subseteq L_{i}\subseteq X_{i}\cup O_{i}$.
\\

Note that we still need to verify that any sequence of such operations
will terminate in a finite time. In every application of Case 1 the
number of vertices in $T_{i}$ decreases by $1$, yet the size of
$T_{i}$ remains unaffected in Cases 2 and 3. Thus Case 1 can be applied
at most $\left|T_{0}\right|$ times. On the other hand, the number
of leaves in $T_{i}$ decreases by 1 in every application of Cases
2 or 3, so the number of times Cases 2 or 3 can be applied consecutively
without applying Case 1 is at most the number of vertices at that
particular stage. Hence the total number of applications is at most
$\sum_{i=1}^{\left|T_{0}\right|}i$, which proves that the process
must terminate in a finite time. 

Let $T_{0},\dots,T_{t}$ be the sequence of reductions obtained during 
the process, and let $a$ be the number of times Case 1 is applied.
Since $g_{i}\geq g_{i-1}-1$ whenever Case 1 is applied, $g_{i}\geq g_{i-1}+2$
whenever Case 2 is applied and $g_{i}\geq g_{i-1}$ whenever Case
3 is applied, it follows that $g_{t}\geq g_{0}-a$. 

On the other hand, note that $\left|C_{i}\right|=\left|C_{i-1}\right|$
whenever Case 3 is applied, yet $\left|C_{i}\right|=\left|C_{i-1}\right|-1$
whenever Case 1 or 2 is applied. Since $\left|C_{t}\right|\geq0$
and $\left|C_{0}\right|=r+1$, it follows that $a\leq r+1$. Thus
we must have $g_{t}\geq g_{0}-\left(r+1\right)$. Since $X_{t}=L_{t}$
and $X_{0}=\emptyset$, this can be rewritten as 
\[
\left|T_{t}\right|-3\left|L_{t}\right|-3\left|C_{t}\right|\geq\left|T_{0}\right|-3\left|C_{0}\right|-\left(r+1\right).
\]
Since $\left|T_{0}\right|=\left|T\right|-r$ and $\left|C_{0}\right|=r+1$,
it follows that 
\[
\left|T_{t}\right|-3\left|L_{t}\right|-3\left|C_{t}\right|\geq\left|T\right|-5r-4.
\]
Since $T_{t}$ is a reduction of $T$, this completes the proof. 
\end{proof}

\section{Delayed version of the game }

Let $T_{t}$, $C_{t}$ and $L_{t}$ be given by Lemma \ref{lem:Lemma 3}
and let $r$ be the number of edges claimed by Isolator during the
first phase of the game. Then Lemma \ref{lem:Lemma 2} implies that
$\alpha\left(T\right)\geq r+\alpha\left(T_{t},C_{t},L_{t}\right)$.
Since $\left|T_{t}\right|-3\left|L_{t}\right|-3\left|C_{t}\right|\geq\left|T\right|-5r-4$,
it suffices to prove that for all trees $T$ with $n$ vertices, $l$
leaves and for any set of edges $C\subseteq E$ we have $\alpha\left(T,C,L\right)\geq\left\lfloor \frac{n-3l-3\left|C\right|+7}{5}\right\rfloor $. 

In order to make our inductive proof work, we need to prove a slightly
stronger statement. Recall that $O$ is the set of occupied vertices,
and since $X=L$ it follows that $O$ is the set of those vertices
of degree at least $2$ which are endpoint of an edge in $C$. Our
aim is to prove the following result. 
\begin{lem}
\label{lem:Lemma 4}Let $T$ be a tree with $n$ vertices and $l$
leaves. Let $O$ be the set of occupied vertices of $T$ and let $C\subseteq E$.
Then 
\begin{equation}
\alpha\left(T,C,L\right)\geq\left\lfloor \frac{n-3l-3\left|C\right|+7+\sum_{v\in O}\left(d\left(v\right)-2\right)}{5}\right\rfloor .\label{eq:2-1}
\end{equation}
In particular, it follows that $\alpha\left(T,C,L\right)\geq\left\lfloor \frac{n-3l-3\left|C\right|+7}{5}\right\rfloor .$
\end{lem}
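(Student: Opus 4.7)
The plan is to prove Lemma \ref{lem:Lemma 4} by strong induction on the number of vertices $n$, using $|C|$ or the number of unclaimed edges as a secondary parameter when needed. Write $\Phi(T, C) = n - 3l - 3|C| + 7 + \sum_{v \in O}(d(v) - 2)$ for the quantity inside the floor. If $\Phi(T, C) \leq 4$ the inequality is trivial since $\alpha \geq 0$, and the small trees (say $n \leq 4$) can be verified by direct inspection across the possible $C$; these form the base of the induction. For the inductive step I prescribe a move for Isolator and, for every response $f$ by Toucher, exhibit a reduction of the resulting position to a strictly smaller tree $T'$ with sets $C'$, $D'=\emptyset$ and $X'=L'$. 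By Lemma \ref{lem:Lemma 2} the score is then at least $k + \alpha(T', C', L')$, where $k$ is the number of non-leaf vertices Isolator just isolated, so it suffices to verify
\[ \Phi(T', C') \geq \Phi(T, C) - 5k \]
in every subcase.

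Before the main case analysis I would use Lemma \ref{lem:Lemma 2} to perform some preprocessing. In particular, if there is a leaf $v$ whose unique incident edge $vw$ lies in $C$ and $d(w) \geq 3$, then the endpoint-swapping trick of Case 3 of Lemma \ref{lem:Lemma 3} lets us replace the attachment of $v$ by another already-touched neighbour of $w$, eventually reducing to the situation where every \emph{pendant touched edge} is attached at a degree-$2$ vertex. After this cleanup the case analysis is driven by the local structure near a leaf: a sufficiently long pendant path ending at a leaf, where Isolator can mimic the path strategy of \cite{key-9} and isolate an internal vertex within a bounded number of rounds; a branch vertex of degree at least $3$ sitting close to a leaf, where the $\sum_{v \in O}(d(v) - 2)$ term provides slack once that branch vertex becomes occupied; and the degenerate situations where $T$ is already very small, which fall back on the base cases. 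In each case Isolator plays an edge that either isolates a non-leaf immediately or sets up such an isolation on her next turn, and Toucher's response is handled subcase by subcase.

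The main obstacle, I expect, will be the subcase in which Toucher's response $f$ touches exactly the vertex Isolator was about to isolate: the naive deletion of a block of vertices then loses too much of $\Phi$. Handling this requires exploiting either that the new endpoint of $f$ raises $\sum_{v \in O}(d(v) - 2)$ by $d(v) - 2$, which is where the extra degree term in $\Phi$ earns its keep and offsets the $-3$ contribution of $|C|$, or a more delicate rerouting reduction in which a would-be isolation is sacrificed while two previously-internal vertices become leaves, so that the $-3l$ term compensates for the lost $k$. Verifying $\Phi(T', C') \geq \Phi(T, C) - 5k$ uniformly over all subcases, while simultaneously maintaining $X' = L'$ after deletions that may create new leaves out of formerly occupied degree-$2$ vertices, is the most delicate bookkeeping step.
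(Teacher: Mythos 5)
Your overall architecture matches the paper's: induction on $n$, a prescribed Isolator move (or a pure reduction) in each case, an appeal to Lemma \ref{lem:Lemma 2}, and the verification of the potential inequality $\Phi(T',C')\geq\Phi(T,C)-5k$ subcase by subcase. The preprocessing you describe (endpoint-swapping for pendant edges of $C$ at branch vertices) is exactly the paper's Case 3, and your ``pendant path'' cases correspond to the paper's Case 5. However, there is a genuine gap: you have not explained why your list of cases is exhaustive, and closing that gap is the real content of the argument. Your cases are all anchored ``near a leaf'' (pendant paths, branch vertices close to a leaf), but a tree in which every leaf hangs directly off a branch vertex, every edge of $C$ is isolated from the others and from the leaves, and no two unoccupied degree-$2$ vertices are adjacent admits none of your moves. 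For such trees you must prove directly that $\Phi(T,C)\leq 4$, so that the bound is vacuous. The paper does this via a separate counting lemma (Lemma \ref{lem:Lemma5}: a tree with no two adjacent degree-$2$ vertices and no leaf adjacent to a degree-$2$ vertex has at least $\frac{n+5}{3}$ leaves), applied to the forest obtained by cutting $T$ along the edges of $C$; the bookkeeping there is what finally forces $n-3l-3|C|+7+\sum_{v\in O}(d(v)-2)\leq 2$. Your sketch asserts ``if $\Phi\leq 4$ the inequality is trivial'' but gives no mechanism for proving $\Phi\leq 4$ in the residual case, and without it the induction does not terminate. Note also that internal maximal paths of unoccupied degree-$2$ vertices between two branch vertices (the paper's Case 6) are not near any leaf and need their own treatment.

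A secondary issue: your proposed secondary induction parameter ($|C|$ or the number of unclaimed edges) does not decrease under the endpoint-swapping reductions you yourself invoke in the preprocessing step --- there $n$, $|C|$ and the number of unclaimed edges are all unchanged and only the number of leaves drops. The induction must therefore be on $n$ first and on the number of leaves second, as in the paper; otherwise the ``eventually reducing'' in your cleanup phase is not well-founded.
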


The proof is an inductive proof, first on the size on $T$ and then on
the number of leaves in $T$. However, for simplicity one could view
it just as an inductive proof on the size of $T$. Given a tree $T$
whose leaves are touched and with some edges claimed by Toucher at
the start, our aim is to either find suitable edges for Isolator that
can help to isolate some vertices, or find suitable substructures
of the tree with many touched edges that could be removed without
deleting too many vertices that could possibly be isolated. In either
case the aim is to reduce the size of $T$ without reducing the lower
bound for the score. 

The structures we are in general looking for are neighbouring vertices
of degree $1$ or $2$, as near such vertices $T$ behaves similarly
to path. If no such substructures exists, it follows that vertices
of degree $1$ or $2$ must be spread out. In particular, it follows
that there must be vertices of higher degree, which in turn implies
that $T$ has plenty of leaves. The aim of the next Lemma is to make
this argument precise. As a consequence, it turns out that if no suitable
substructure of $T$ exist, then the expression $\left\lfloor \frac{n-3l-3\left|C\right|+7+\sum_{v\in O}\left(d\left(v\right)-2\right)}{5}\right\rfloor $
in (\ref{eq:2-1}) turns out to be at most $0$, and hence the claim
is certainly true.

Since there are several substructures we are considering in $T$,
the proof splits into many cases and the proofs of some cases are
rather long. This is due to the fact that for each substructure, the
proof often splits into multiple sub-cases based on the structure
of $T$ on vertices near the substructure. In general, the proofs
are fairly easy within each case, and the same ideas are repeatedly in
different cases. In a sense, the hardest idea is to come up with a
suitable lower bound in (\ref{eq:2-1}) that is strong enough for
an inductive argument. 
\begin{lem}
\label{lem:Lemma5}Let $T$ be a tree with $n\geq3$ vertices in which
there are no two adjacent vertices of degree $2$ and no leaf adjacent
to a vertex of degree $2$. Then $T$ contains at least $\frac{n+5}{3}$
leaves. 
\end{lem}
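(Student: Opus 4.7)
The plan is to reduce to a short averaging argument on an auxiliary tree on the high-degree vertices. Writing $n = l + m + h$ where $l$, $m$, $h$ count vertices of degree $1$, $2$, and at least $3$ respectively, the desired bound $l \geq (n+5)/3$ rearranges to $2l \geq m + h + 5$. I would first dispose of the degenerate cases $h \leq 1$: the hypothesis forces both neighbours of each degree-$2$ vertex to lie in $H := \{v : d(v) \geq 3\}$, so $h = 0$ is impossible for $n \geq 3$, and $h = 1$ forces $m = 0$ (the two neighbours of a degree-$2$ vertex would otherwise coincide, creating a multiple edge), whence $T$ is a star and $l = n - 1 \geq (n+5)/3$ using that the unique high-degree vertex has degree at least $3$.

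For $h \geq 2$, I would define the auxiliary graph $T'$ on $H$ by deleting every leaf of $T$ and then suppressing every degree-$2$ vertex into an edge joining its two neighbours. The two hypotheses are exactly what keeps this operation clean: each suppression produces an edge with both endpoints in $H$, and no two suppressions interact. A quick check, using that any cycle or multiple edge in $T'$ would lift to a cycle in $T$, shows $T'$ is a simple tree on $h$ vertices, and hence has $h - 1$ edges; since $m$ of these come from suppressed vertices, this already gives $m \leq h - 1$.

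For each $v \in H$, every edge of $T$ at $v$ is either a leaf edge or corresponds uniquely to an edge of $T'$ at $v$, so $d_T(v) = \ell(v) + d_{T'}(v)$, where $\ell(v)$ is the number of leaves of $T$ adjacent to $v$. Since $d_T(v) \geq 3$ this gives $\ell(v) \geq 3 - d_{T'}(v)$. Doubling and summing over $v \in H$ (the inequality still holds trivially when the right side is negative), and using $\sum_{v \in H} d_{T'}(v) = 2(h - 1)$, yields
\[
2l = 2\sum_{v \in H} \ell(v) \geq 6h - 4(h - 1) = 2h + 4,
\]
so combining with $m \leq h - 1$ produces $2l \geq 2h + 4 \geq m + h + 5$, as required.

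The main work is really just the bookkeeping around $T'$: verifying that it is a simple tree on $H$ and that $d_{T'}(v)$ correctly absorbs both the direct $H$-to-$H$ edges at $v$ and the degree-$2$ neighbours of $v$. Both facts are immediate from the two hypotheses, so the degree count above is the substance of the argument.
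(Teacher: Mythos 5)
Your proof is correct, and it takes a recognizably different route from the paper's. The paper works purely with the degree sequence: writing $d_i$ for the number of vertices of degree $i$, it combines the handshake identity $\sum i d_i = 2(n-1)$ and the tree identity $d_1 = 2 + \sum_{i\ge 3}(i-2)d_i$ with the single structural observation that every edge meets a vertex of degree at least $3$ (so $d_1 + 2d_2 \le \sum_{i\ge 3} i d_i$), then finishes with the numerical inequality $3(i-2)\ge i$ for $i\ge 3$. You instead build the auxiliary tree $T'$ on the high-degree set $H$ by deleting leaves and suppressing degree-$2$ vertices, and extract the two separate inequalities $m \le h-1$ (from the edge count of $T'$) and $l \ge h+2$ (from $d_T(v) = \ell(v) + d_{T'}(v) \ge 3$ summed over $H$), which combine to $3l \ge n+5$. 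The underlying facts are the same -- both arguments hinge on every edge touching $H$ plus the tree edge count -- but your packaging is more structural and yields the two intermediate bounds as statements of independent interest, at the cost of having to verify that $T'$ is a simple tree and of a separate treatment of $h \le 1$ (which the paper's computation handles uniformly). Your degenerate-case analysis and the final bookkeeping are all correct.
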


\begin{proof}
Let $d_{i}$ be the number of vertices in $T$ of degree $i$. It
is well-known that we have 
\begin{equation}
\sum_{i=1}^{n}id_{i}=2\left(n-1\right)\label{eq:L1.1}
\end{equation}
and 
\begin{equation}
d_{1}=2+\sum_{i=3}^{n}\left(i-2\right)d_{i}.\label{eq:L1.2}
\end{equation}

Let $X$ be the set of vertices in $T$ that have degree $1$ or $2$,
and let $Y$ be the set of vertices in $T$ that have degree at least
$3$. Note that there are no edges inside $X$. Indeed, trivially
no two leaves can be adjacent in any tree with at least $3$ vertices,
and by assumption no leaf is adjacent to a vertex of degree $2$ and
no two vertices of degree $2$ are adjacent. Hence it follows that
\begin{equation}
d_{1}+2d_{2}=e\left(X,Y\right)\leq\sum_{y\in Y}d\left(y\right)=\sum_{i=3}^{n}id_{i}.\label{eq:L1.3}
\end{equation}
Combining (\ref{eq:L1.1}) with (\ref{eq:L1.3}), it follows that
\[
\sum_{i=3}^{n}id_{i}\geq n-1.
\]
Since $3\left(i-2\right)\geq i$ holds for all $i\geq3$, it follows
that 
\[
\sum_{i=3}^{n}\left(i-2\right)d_{i}\geq\frac{1}{3}\sum_{i=3}^{n}id_{i}\geq\frac{n-1}{3}.
\]
Thus from (\ref{eq:L1.2}) it follows that $d_{1}\geq2+\frac{n-1}{3}=\frac{n+5}{3}$,
which completes the proof. 
\end{proof}
We are now ready to prove Lemma \ref{lem:Lemma 4}. 

\begin{proof}[Proof of Lemma \ref{lem:Lemma 4}] The proof is by induction
on $n$, and for fixed $n$ we also induct on the number of leaves.
Let $C=\left\{ e_{1},\dots,e_{k}\right\} $ be the set of edges claimed
by Toucher at the start of the game, and for convenience let $l=\left|L\right|$
and $k=\left|C\right|$ throughout the proof. We start by checking
the base cases. For fixed $n$, note that the claim follows if $T$
is path by \cite{key-9}. Hence for given $n$, the base case for
the induction on the number of leaves hold. Thus we may always assume
that $T$ is not a path. 

Next we prove that the claim holds whenever $n\leq5$. Since $T$
is not a path, we must have $l\geq3$. Note that we always have $\sum_{v\in O}\left(d\left(v\right)-2\right)\leq\sum_{v\not\in L}\left(d\left(v\right)-2\right)=l-2$.
Hence it follows that for $n\leq5$ and $l\geq3$ we have 
\[
n+7-3l-3k+\sum_{v\in O}\left(d\left(v\right)-2\right)\leq n+5-2l\leq4,
\]
which completes the proof as we always have $\alpha\left(T,C,L\right)\geq0$.
Thus from now on we may assume that $T$ has at least $6$ vertices
and that $T$ is not a path.

We split the proof into cases based on whether $T$ contains suitable
substructures. At the end we prove that if $T$ contains none of these
substructures, then we must have $n-3l-3k+7+\sum_{v\in O}\left(d\left(v\right)-2\right)<5$,
in which case the claim follows as well. 

In the first four cases we consider those situations in which $C$
contains two edges that are 'close' to each others or an edge close
to a leaf. In those cases we prove that a suitable part of the tree
can be removed already before the start of the game in a way that
the resulting tree is a reduction of the original tree and so that
this reduction does not decrease the score. In particular, note that
we have $D=I=\emptyset$ in those cases. In the remaining two cases
we consider situations when $T$ contains sufficiently many neighbouring
vertices of degree $2$ which can be claimed by Isolator in order
to increase the score. 

Let us first focus on those cases in which we can simply reduce $T$
before the game starts. Given a reduction $T_{1}$ of $T$ with appropriate
sets $C_{1}$, $L_{1}$ and $D_{1}$ satisfying $D_{1}=\emptyset$,
for convenience we define $d\left(k\right)=\left|C\right|-\left|C_{1}\right|$,
$d\left(l\right)=\left|L\right|-\left|L_{1}\right|$, $d\left(n\right)=\left|T\right|-\left|T_{1}\right|$
and 
\[
d\left(s\right)=\sum_{v\in O}\left(d_{T}\left(v\right)-2\right)-\sum_{v\in O_{1}}\left(d_{T_{1}}\left(v\right)-2\right).
\]
For $v\in V\left(T\right)$, define 
\[
d_{s}\left(v\right)=\left(d_{T}\left(v\right)-2\right)\mathbb{I}\left\{ v\in O\right\} -\left(d_{T_{1}}\left(v\right)-2\right)\mathbb{I}\left\{ v\in O_{1}\right\} ,
\]
where $\mathbb{I}$ denotes the indicator function of an event, and
note that $d\left(s\right)=\sum_{v\in V\cup V_{1}}d_{s}\left(v\right)$.
Finally define $D\left(T,T_{1}\right)=d\left(n\right)-3d\left(l\right)-3d\left(k\right)+d\left(s\right)$.
Note that this also depends on the sets $C$ and $C_{1}$, but the
dependence will not be highlighted in the notation as these sets are
clear from the context. 

Let 
\[
S\left(T\right)=\left\lfloor \frac{n-3l-3k+7+\sum_{v\in O}\left(d\left(v\right)-2\right)}{5}\right\rfloor ,
\]
and again note that $S$ also depends on $C$. If $d\left(n\right)>0$,
then the inductive hypothesis implies that $\alpha\left(T_{1},C_{1},L_{1}\right)\geq S\left(T_{1}\right)$.
If we also had $D\left(T,T_{1}\right)\leq0$, it would certainly follow
that $S\left(T_{1}\right)\geq S\left(T\right)$. Since $T_{1}$ is
a reduction of $T$, Lemma \ref{lem:Lemma 2} implies that $\alpha\left(T,C,L\right)\geq\alpha\left(T_{1},C_{1},L_{1}\right)$, 
as $D=\emptyset$ implies that $I=\emptyset$. Combining all of
these together implies that $\alpha\left(T,C,L\right)\geq S\left(T\right)$,
as required. Hence if $\left|T\right|>\left|T_{1}\right|$ it suffices
to prove that $D\left(T,T_{1}\right)\leq0$. We now move on to considering
various substructures of $T$. \\

\textbf{Case 1. }$T$ contains an unoccupied vertex of degree $2$
whose both neighbours are touched.\\

Since $n\geq3$, it follows that either both of the vertices are occupied,
or one of them is occupied and the other is a leaf. We start by reducing
the first case to the second case. 

Let $v$ be the unoccupied vertex of degree $2$, and let $u$ and
$w$ be the neighbours of $v$, and set $N\left(u\right)\setminus\left\{v\right\} =\left\{ u_{1},\dots,u_{r}\right\} $. 
Since $u$ is occupied, it follows that there exists $j$ for which $uu_{j}\in C$. Let $a$ be a leaf in $T$ so that every
path from $a$ to $u$ must go through $w$. Consider $T_{1}$ obtained
by deleting all the edges with $u$ as an endpoint apart from the edge
$uv$ and adding the edges $au_{1},\dots,au_{r}$, as illustrated
in Figure 1. We also take $C_{1}$ to be the set containing all the edges in $C$ that do not have $u$ as an endpoint,
and all the edges of the form $au_{i}$ for those $i$ with $uu_{i}\in C$.

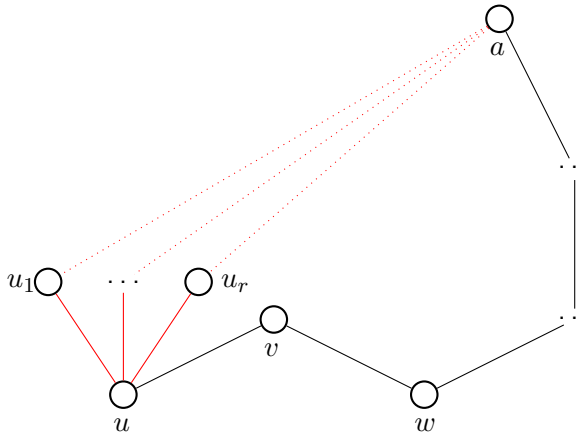
\begin{figure}
\centering
\caption{Construction of $T_{1}$. Red rigid edges are edges in $T$ which
are replaced with red dotted edges. }

\begin{tikzpicture}

\node[main node] (n1) at (1,8){};
\node[text width=0.5cm] at (1.12,7.6){$u$};
\node[main node] (n2) at (3,9){};
\node[text width=0.5cm] at (3.12,8.6){$v$};
\node[main node] (n3) at (5,8){};
\node[text width=0.5cm] at (5.12,7.6){$w$};
\node[every node](n4) at (7,9){};
\node[every node](n9) at (7,11){};
\node[text width=0.5cm] at (7.025,9.025){\ldots}; 
\node[text width=0.5cm] at (7.025,11.025){\ldots};
\node[text width=0.5cm] at (1.025,9.505){\ldots};
\node[main node] (n5) at (6,13){};
\node[text width=0.5cm] at (6.12,12.6){$a$};

\node[main node] (n6) at (0,9.5){};
\node[text width=0.5cm] at (-0.3,9.48){$u_1$};
\node[every node ] (n7) at (1,9.5){};
\node[main node] (n8) at (2,9.5){};
\node[text width=0.5cm] at (2.55,9.48){$u_r$};

\draw (n1) -- (n2);
\draw (n2) -- (n3);
\draw (n3) -- (n4);
\draw (n4) -- (n9);
\draw (n9) -- (n5);
\draw[style={draw=red}] (n1) -- (n6);
\draw[style={draw=red}] (n1) -- (n7);
\draw[style={draw=red}] (n1) -- (n8);

\draw[style={draw=red, dotted}]  (n5)--(n6);
\draw[style={draw=red, dotted}]  (n5)--(n7);
\draw[style={draw=red, dotted}]  (n5)--(n8);

\end{tikzpicture}

\end{figure}

It is easy to see that $T_{1}$ is a reduction of $T$ by taking  $f_{E}\left(uu_{i}\right)=au_{i}$. 
Indeed, this follows from the fact that $a$ is touched in $T_{1}$ as $au_{j}\in C_{1}$.
It is also easy to check that $d\left(k\right)=0$ and $d\left(n\right)=0$.
We also have $L_{1}=\left(L\setminus\left\{ a\right\} \right)\cup\left\{ u\right\} $,
which implies that $d\left(l\right)=0$. Note that $a$ and $u$ 
are the only vertices whose degrees are affected during the process.
Since we have $d_{s}\left(a\right)=1-r$ and $d_{s}\left(u\right)=r-1$, it follows that $d\left(s\right)=0$. In
particular, we have $S\left(T\right)=S\left(T_{1}\right)$, and thus
by Lemma \ref{lem:Lemma 2} it suffices to prove the claim for $T_{1}$. 

Hence we may assume that $T$ contains an unoccupied vertex $v$ of
degree $2$ with neighbours $u$ and $w$ so that $u$ is a leaf and
$w$ is occupied. Let $w_{1}$ be chosen such that $ww_{1}\in C$.
Since $T$ has at least $6$ vertices, we must have $d\left(w\right)+d\left(w_{1}\right)\geq4$.
We start by considering the cases corresponding to $d\left(w\right)+d\left(w_{1}\right)=4$,
and it is easy to check that this occurs exactly when $\left(d\left(w\right),d\left(w_{1}\right)\right)\in\left\{ \left(2,2\right),\left(3,1\right)\right\} $. 

If $d\left(w\right)=d\left(w_{1}\right)=2$, consider $T_{1}$ obtained
by deleting the vertices $u$, $v$ and $w$, and take $C_{1}=C\setminus\left\{ ww_{1}\right\} $.
Since $w_{1}$ is a leaf in $T_{1}$, it follows that $T_{1}$ is
a reduction of $T$. Since $L_{1}=\left(L\setminus\left\{ v\right\} \right)\cup\left\{ w_{1}\right\} $,
it follows that $d\left(l\right)=0$. It is also easy to check that
we have $d\left(n\right)=3$, $d\left(k\right)=1$ and $d\left(s\right)=0$.
Thus it follows that $D\left(T,T_{1}\right)=0$, and since $\left|T\right|>\left|T_{1}\right|$
the claim follows by induction. 

If $d\left(w\right)=3$ and $d\left(w_{1}\right)=1$, let $x$ be
chosen such that $N\left(w\right)=\left\{ x,v,w_{1}\right\} $. Consider
$T_{1}$ obtained by deleting the vertices $u$, $v$ and $w_{1}$,
and take $C_{1}=C\setminus\left\{ ww_{1}\right\} $. Since $N_{T}\left(w\right)=\left\{ x,v,w_{1}\right\} $,
it follows that $w$ is a leaf in $T_{1}$, and hence $T_{1}$ is
a reduction of $T$. Note that we have $L_{1}=\left(L\setminus\left\{ u,w_{1}\right\} \right)\cup\left\{ w\right\} $
and that $w$ is the only vertex whose degree is affected during the 
process. Thus it follows that $d\left(n\right)=3$, $d\left(l\right)=1$
and $d\left(k\right)=1$, and since $d_{s}\left(w\right)=1$ we also
have $d\left(s\right)=1$. Hence we have $D\left(T,T_{1}\right)=-2$,
and since $\left|T\right|>\left|T_{1}\right|$ the claim follows by
induction. 

Now suppose that $d\left(w\right)+d\left(w_{1}\right)\geq5$. Let
$N\left(w_{1}\right)\setminus\left\{ w\right\} =\left\{ a_{1},\dots,a_{c}\right\} $
and $N\left(w\right)\setminus\left\{ v,w_{1}\right\} =\left\{ a_{c+1},\dots,a_{d}\right\} $
where one of these sets might be empty. Note that $d\left(w\right)+d\left(w_{1}\right)\geq5$
implies that $d\geq2$. Consider $T_{1}$ obtained by deleting the
vertices $u$ and $v$ and by taking $E\left(T_{1}\right)$ to be
the set of those edges in $T$ that do not have $w$ or $w_{1}$ as
their endpoint together with the edges $ww_{1}$, $w_{1}a_{1}$ and
$wa_{i}$ for $2\leq i\leq d$. See Figure 2 for illustration when
$c=0$. Finally we take $C_{1}$ to be the set containing the edge
$ww_{1}$, all the edges in $C$ that do not have $w$ or $w_{1}$
as an endpoint, and the unique edge in $\left\{ wa_{i},w_{1}a_{i}\right\} \cap E\left(T_{1}\right)$
for those $i$ for which the one of $wa_{i}$ or $w_{1}a_{i}$ that is
an edge in $T$ is also in $C$. In particular, it follows that $\left|C_{1}\right|=\left|C\right|$.

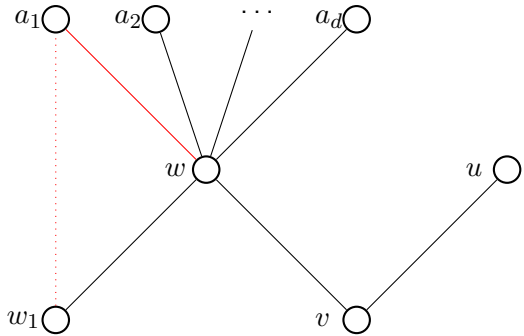
\begin{figure}
\centering
\caption{Construction of $T_{1}$ when $d\left(w\right) + d\left(w_{1}\right)\geq 5$ and $c=0$. Red rigid edge is an edge in
$T$ that is replaced with the red dotted edge. }
\begin{tikzpicture}
\node[main node] (n1) at (1,10){};
\node[text width=0.5cm] at (0.7,10){$w$};
\node[main node] (n2) at (3,8){};
\node[text width=0.5cm] at (2.7,8){$v$};
\node[main node] (n3) at (5,10){};
\node[text width=0.5cm] at (4.7,10){$u$};
\node[main node] (n5) at (-1,8){};
\node[text width=0.5cm] at (-1.4,8){$w_1$};
\node[main node](n6) at (-1,12){};
\node[text width=0.5cm] at (-1.3,12){$a_1$};
\node[main node](n10) at (0.33,12){};
\node[text width=0.5cm] at (0.03,12){$a_2$};
\node[every node](n7) at (1.66,12){};
\node[main node](n8) at (3,12){};
\node[text width=0.5cm] at (2.7,12){$a_d$};
\node[text width=0.5cm] at (1.7,12.1){\ldots}; 

\draw (n1) -- (n2);
\draw (n2) -- (n3);
\draw (n1) -- (n5);
\draw (n1) -- (n7);
\draw (n1) -- (n8);
\draw (n1) -- (n10);
\draw[style={draw=red}] (n1) -- (n6);
\draw[style={draw=red, dotted}]  (n5)--(n6);

\end{tikzpicture}

\end{figure}

Note that $T_{1}$ is a reduction of $T$ by taking $f\left(wa_{i}\right)=a_{1}a_{i}$
for all $2\leq i\leq c$ since both $w$ and $w_{1}$ are touched
in $T$ and $T_{1}$. It is easy to check that we have $d\left(n\right)=2$,
$d\left(l\right)\geq1$ and $d\left(k\right)=0$. It is easy to see
that the only vertices whose degrees are affected are $w$ and
$w_{1}$. We have $d_{s}\left(w\right)=\left(d-c\right)-\left(d-1\right)=1-c$
and $d_{s}\left(w\right)=\max\left(c-1,0\right)-0\leq c$ as $c\geq0$. Also note that $u$ and $v$ are the only 
deleted vertices, and we have $d_{s}\left(v\right)=0$ and $d_{s}\left(u\right)=0$. 
Hence it follows that $d\left(s\right)\leq1$ and hence it
follows that $D\left(T,T_{1}\right)\leq0$. Since $\left|T\right|>\left|T_{1}\right|$,
the claim follows by induction. This completes the proof of Case 1.
$\hfill\square$\\

\textbf{Case 2.} $T$ contains an edge $e\not\in C$ whose both endpoints
are touched. \\

There are again two possibilities: either both endpoints of $e$ are
occupied or one of them is occupied and the other is a leaf. By using
same argument as in Case 1 we may assume that one of the endpoints
is occupied and the other is a leaf. Let $u$ and $v$ be the endpoints
of $e$ such that $u$ is a leaf and $v$ is occupied, and let $w$
be chosen such that $vw\in C.$ Again we split into cases based on
the size of $d\left(v\right)+d\left(w\right)$, and since $T$ has
at least $6$ vertices it follows that $d\left(v\right)+d\left(w\right)\geq4$,
and again $d\left(v\right)+d\left(w\right)=4$ implies that $\left(d\left(v\right),d\left(w\right)\right)\in\left\{ \left(2,2\right),\left(3,1\right)\right\} $. 

If $d\left(v\right)=d\left(w\right)=2$, consider $T_{1}$ obtained
by deleting the vertices $u$ and $v$, and take $C_{1}=C\setminus\left\{ vw\right\} $.
Since $d_{T}\left(w\right)=2$, it follows that $w$ is a leaf in
$T_{1}$, and hence $T_{1}$ is a reduction of $T$. It is easy to
check that we have $d\left(n\right)=2$, $d\left(l\right)=0$ and
$d\left(k\right)=$1. Since $w$ is the only vertex whose degree is
affected during the process and $d_{s}\left(w\right)=0$, it follows
that $d\left(s\right)=0$. Hence we have $D\left(T,T_{1}\right)=-1$,
and since $\left|T\right|>\left|T_{1}\right|$ the claim follows by
induction. 

If $d\left(v\right)=3$ and $d\left(w\right)=1$, let $T_{1}$ be
the tree obtained by deleting the vertices $u$ and $w$, and set 
$C_{1}=C\setminus\left\{ vw\right\} $. Since $v$ is a leaf in $T_{1}$
it follows that $T_{1}$ is a reduction of $T$. It is easy to check
that $d\left(n\right)=2$, $d\left(l\right)=1$ and $d\left(k\right)=1$.
Note that $v$ is the only vertex whose degree is affected during
the process, and since $d_{s}\left(v\right)=1$ it follows that $d\left(s\right)=1$.
Hence we have $D\left(T,T_{1}\right)=-3$, and since $\left|T\right|>\left|T_{1}\right|$
the claim follows by induction. 

Finally suppose that $d\left(v\right)+d\left(w\right)\geq5$. Let
$N\left(v\right)\setminus\left\{ u,w\right\} =\left\{ a_{1},\dots,a_{c}\right\} $
and $N\left(w\right)\setminus\left\{ v\right\} =\left\{ a_{c+1},\dots,a_{d}\right\} $
where one of these sets might be empty. Note that $d\left(v\right)+d\left(w\right)\geq5$
implies that $d\geq2$. Consider $T_{1}$ obtained by deleting the
vertex $u$, and we take $E\left(T_{1}\right)$
to be the set of those edges in $T$ that do not have $v$ or $w$
as their endpoint together with $vw$, $va_{1}$ and $wa_{i}$ for
$2\leq i\leq d$. Let $C_{1}$ to be the set containing the
edge $vw$, all the edges in $C$ that do not have $v$ or $w$ as
an endpoint, and the unique edge in $\left\{ va_{i},wa_{i}\right\} \cap E\left(T_{1}\right)$
for those $i$ for which the one of $va_{i}$ or $wa_{i}$ that is
an edge in $T$ is also in $C$, and note that we have $\left|C_{1}\right|=\left|C\right|$.
Since $v$ and $w$ are occupied in both $T$ and $T_{1}$,
it follows that $T_{1}$ is a reduction of $T$ by taking $f_{E}\left(va_{i}\right)=wa_{i}$
for all $2\leq i\leq c$ if $c\geq1$, or by taking $f_{E}\left(wa_{1}\right)=va_{1}$ if $c=0$. 

Note that we have $d\left(n\right)=1$, $d\left(l\right)\geq1$ and
$d\left(k\right)=0$. Since $v$ and $w$ are the only vertices whose
degrees are affected during the process, and since we have $d_{s}\left(v\right)=c$
and $d_{s}\left(w\right)=\max\left(d-c-1,0\right)-\left(d-2\right)\leq2-c$,
it follows that $d\left(s\right)\leq2$. Hence we have $D\left(T,T_{1}\right)\leq0$,
and since $\left|T\right|>\left|T_{1}\right|$ the claim follows by
induction. This completes the proof of Case 2. $\hfill\square$\\

\textbf{Case 3}. There exist an edge $e\in C$ whose endpoint is a
leaf. \\

Let $u$ and $v$ be the endpoints of $e$ with $u$ being the leaf.
First suppose that $d\left(v\right)=2$, and let $w$ be the other
neighbour of $v$. Let $T_{1}$ be obtained by deleting the vertex
$u$, and take $C_{1}=C\setminus\left\{ uv\right\} $. Since $v$
is touched in both $T_{1}$ and $T$, it follows that $T_{1}$ is a
reduction of $T$. It is easy to check that we have $d\left(n\right)=1$,
$d\left(l\right)=0$, $d\left(k\right)=1$ and $d\left(s\right)=0$.
Thus it follows that $D\left(T,T_{1}\right)=-2$, and since $\left|T\right|>\left|T_{1}\right|$
the claim follows by induction. 

Now suppose that $d\left(v\right)\geq3$, and let $N\left(v\right)\setminus\left\{ u\right\} =\left\{ v_{1},\dots,v_{c}\right\} $
where $c\geq2$. Consider $T_{1}$ obtained by replacing the edge
$vv_{1}$ with $uv_{1}$. It is easy to see that $T_{1}$ is a reduction
of $T$ by taking $f_{E}\left(vv_{1}\right)=uv_{1}$, as $u$ and
$v$ are touched vertices in $T_{1}$ and $T$. It is easy to check
that $d\left(n\right)=0$, $d\left(l\right)=1$ and $d\left(k\right)=0$.
Note that $u$ and $v$ are the only vertices whose degrees are affected
during the process, and we clearly have $d_{s}\left(v\right)=1$
and $d_{s}\left(u\right)=0$. Hence it follows that $d\left(s\right)=1$,
and thus $D\left(T,T_{1}\right)=-2\leq0$. Since the number of vertices
remains the same and the number of leaves decreases by one, the claim
follows by induction. This completes the proof of Case 3. $\hfill\square$
\\

\textbf{Case 4. }There exist distinct edges $e_{i},\,e_{j}\in C$
that have a common endpoint. \\

Let $u$ be the common endpoint of $e_{i}$ and $e_{j}$, and let
$v$ and $w$ be the other endpoints respectively. By Case 3 we may
assume that neither of $v$ nor $w$ is a leaf. Let $N\left(u\right)\setminus\left\{ v,w\right\} =\left\{ u_{1},\dots,u_{r}\right\} $
with possibly $r=0$. Consider $T_{1}$ obtained by removing the vertex
$u$ together with all the edges that have $u$ as an endpoint,
and by adding the edges $vw$ and $vu_{i}$ for all $1\leq i\leq r$ as in Figure 3, 
and note that $T_{1}$ is certainly a tree. Let $C_{1}$ be the
set containing the edge $vw$, all the edges in $C$ that do not have
$u$ as an endpoint and all the edges $vu_{i}$ for those $i$ for
which $uu_{i}\in C$. Since both $u$ and $v$ are touched, it follows
that $T_{1}$ is a reduction of $T$ by taking $f_{E}\left(uu_{i}\right)=vu_{i}$
for all $i$. 

\begin{figure}
\centering
\caption{Construction of $T_{1}$. Red rigid edges are edges in $T$ which
are replaced with red dotted edges.}
\begin{tikzpicture}

\node[main node] (n1) at (1,6){};
\node[main node] (n2) at (2,5){};
\node[main node] (n3) at (3,6){};
\node[every node] (n4) at (-0.4,6){};
\node[every node] (n5) at (4.4,6){};
\node[text width=0.5cm] at (-0.5,6){\ldots};
\node[text width=0.5cm] at (4.5,6){\ldots};
\node[main node] (n6) at (4,4){};
\node[main node] (n7) at (2,4){};
\node[every node] (n8) at (3,4){};
\node[text width=0.5cm] at (2.95,3.95){\ldots};
\node[text width=0.5cm] at (1.15, 6.3){$w$};
\node[text width=0.5cm] at (3.15, 6.3){$v$};
\node[text width=0.5cm] at (2.15, 5.3){$u$};
\node[text width=0.5cm] at (2.15, 3.65){$u_1$};
\node[text width=0.5cm] at (4.15, 3.65){$u_r$};

\draw[style={draw=red}] (n1) -- (n2);
\draw[style={draw=red}] (n2) -- (n3);
\draw (n1) -- (n4);
\draw (n3) -- (n5);
\draw[style={draw=red}] (n2) -- (n6);
\draw[style={draw=red}] (n2) -- (n7);
\draw[style={draw=red}] (n2) -- (n8);
\draw[style={draw=red}, dotted] (n1) -- (n3);
\draw[style={draw=red}, dotted] (n3) -- (n6);
\draw[style={draw=red}, dotted] (n3) -- (n7);
\draw[style={draw=red}, dotted] (n3) -- (n8);
\end{tikzpicture}
\end{figure}
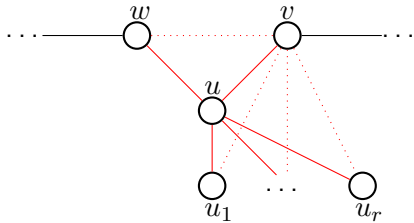

It is easy to check that $d\left(n\right)=1$, $d\left(l\right)=0$
and $d\left(k\right)=1$. Note that the only vertices whose degrees
are affected during the process are $v$ and $w$, and the vertex $u$ is also deleted. Since neither
of $v$ nor $w$ is a leaf, it is easy to check that $d_{s}\left(u\right)=r$,
$d_{s}\left(v\right)=\left(d_{T}\left(v\right)-2\right)-\left(d_{T}\left(v\right)+r-1-2\right)=1-r$
and $d_{s}\left(w\right)=1$. In particular, it follows that $d\left(s\right)=2$
and hence we have $D\left(T,T_{1}\right)=0$. Since $\left|T\right|>\left|T_{1}\right|$,
the claim follows by induction, and this completes the proof of Case
4. $\hfill\square$\\

From now on we suppose that $T$ contains no configuration described
in Cases 1-4, and hence the edges in $C$ are suitably 'isolated'.
We now consider those cases when $T$ contain two adjacent vertices
of degree $2$ that are both unoccupied. In such a case the edges
incident to the vertex of degree $2$ could be suitable moves for Isolator.
Our aim is to describe a sequence of moves for Isolator that allow
her to increase the score in a way that the resulting tree (together
with the new moves) has a reduction with sufficiently large score. 

From now on we change our notation slightly: let $C'$ be the set
of those edges claimed by Toucher at the start of the delayed game
and let $O'$ be the set of occupied vertices at the start of the
delayed game. Let $D$ be the set of those edges claimed by Isolator
during the new moves, and let $\hat{C}$ be the set of edges claimed
by Toucher during the new moves, and for convenience we write $\hat{C}=\left\{ f_{1},\dots,f_{\left|D\right|}\right\} $.
Finally we set $C=C'\cup\hat{C}$, and hence $C$ is the set of all
edges claimed by Toucher at the end of the process, i.e.\ once the
new moves have been played. 

In all the cases we are about to consider, the edges in $D$ form
a path in $T$ so that all the vertices on this path except the endpoints
have degree $2$ in $T$, and the endpoints have degree at least $3$
or are touched at the end of the process. In particular, it follows
that the number of vertices isolated during the process is exactly $\left|D\right|-1$. 

Again our aim is to seek for a suitable reduction $T_{1}$ of $T$
for these choices of $C$, $D$ and $X=L$, and as usual we require
that $D_{1}=\emptyset$ and $X_{1}=L_{1}$. Again, since there are
exactly $\left|D\right|-1$ isolated vertices, Lemma \ref{lem:Lemma 2}
implies that $\alpha\left(T,C,D,L\right)\geq\left(\left|D\right|-1\right)+\alpha\left(T_{1},C_{1},D_{1},L_{1}\right)$.
If $\left|T\right|>\left|T_{1}\right|$, we know that $\alpha\left(T_{1},C_{1},D_{1},L_{1}\right)\geq S\left(T_{1}\right)$
by induction. Note that during the process of claiming new edges we
fix a suitable strategy for Isolator, but we allow Toucher to play
arbitrary edges on her moves. Hence it follows that $\alpha\left(T,C',L\right)\geq\alpha\left(T,C,D,L\right)$,
as playing the edges in $D$ corresponds to a certain choice of strategy,
which may or may not be optimal. 

Define $d\left(n\right)=\left|T\right|-\left|T_{1}\right|$, $d\left(l\right)=\left|L\right|-\left|L_{1}\right|$,
$d\left(k\right)=\left|C\right|-\left|C'\right|$ and 
\[
d\left(s\right)=\sum_{v\in O'}\left(d\left(v\right)-2\right)-\sum_{v\in O}\left(d\left(v\right)-2\right).
\]
In particular, note that $d\left(k\right)$ and $d\left(s\right)$
are defined for the initial set-up of the delayed game, and not for
the set-up containing the new edges that are played. As before, we
define $D\left(T,T_{1}\right)=d\left(n\right)-3d\left(l\right)-3d\left(k\right)+d\left(s\right)$.
Again, if we had $D\left(T,T_{1}\right)\leq5\left(\left|D\right|-1\right)$,
it would follow that $S\left(T_{1},C_{1}\right)+\left(\left|D\right|-1\right)\geq S\left(T,C'\right)$
and hence it would follow that $\alpha\left(T,C',L\right)\geq S\left(T,C'\right)$,
where the dependence on the set of claimed edges $C'$ is highlighted
in the notation for clarity. Hence if $\left|T\right|>\left|T_{1}\right|$
it suffices to prove that $D\left(T,T_{1}\right)\leq5\left(\left|D\right|-1\right)$. 

We now focus on the unoccupied vertices of degree $2$. First we consider
a case when there exists such an unoccupied vertex whose neighbour
is a touched vertex, although this case splits into a number of sub-cases. \\

\textbf{Case 5.} There exists an unoccupied vertex of degree $2$
whose neighbour is touched. \\

Let $v_{1}$ be the unoccupied vertex of degree $2$ and let $v_{0}$
be the neighbour of $v_{1}$ that is touched. Note that by using the
same argument as in the Case 1 we may assume that $v_{0}$ is a leaf.
We start by constructing a sequence of vertices $v_{0},v_{1},\dots,v_{m}$
as follows: given an unoccupied vertex $v_{i}$ of degree $2$ with
$v_{i-1}v_{i}\in E$, let $v_{i+1}$ be chosen such that $N\left(v_{i}\right)=\left\{ v_{i-1},v_{i+1}\right\} $.
Let $m$ denote the index for which the process stops, i.e.\ $m$
is the least positive integer for which $v_{m}$ is touched or $d\left(v_{m}\right)\geq3$.
Since $v_{1}$ is an unoccupied vertex of degree $2$, it follows
that $m\geq2$. 

We now split into our main cases which mostly depend on the value
of $m$. For convenience, we say that a vertex $v$ is \textit{initially
touched} if $v$ is touched in the initial set-up of the board. That
is, if $v$ is unoccupied on the initial board but $v$ is an endpoint
of one of the move played by Toucher once the game has started, we
do not consider $v$ as an initially touched vertex and we say that
$v$ is \textit{initially untouched}. \\

\textbf{Case 5.1. }$m=2$. \\

By Case 1 it follows that $v_{2}$ cannot be touched. Hence by the
choice of $m$ we must have $d\left(v_{2}\right)\geq3$. 
Suppose that Isolator claims the edge $v_{1}v_{2}$ on her first move. If Toucher
claims the edge $v_{0}v_{1}$ on her first move, we stop. Otherwise
Isolator claims the edge $v_{0}v_{1}$ on her second
move, and we stop after Toucher's second move. 

First consider the case when Isolator managed to claim both of these
edges, and consider $T_{1}$ obtained by deleting the vertices $v_{0}$
and $v_{1}$. Take $C_{1}=C'\cup\left\{ f_{1},f_{2}\right\} $, and
recall that $f_{1}$ and $f_{2}$ are the edges claimed by Toucher
on her two moves. It is easy to see that $T_{1}$ is a reduction of
$T$, and we have $d\left(n\right)=2$, $d\left(l\right)=1$ and $d\left(k\right)=-2$.
Note that $v_{2}$ is the only vertex whose degree is affected during
the process, and $v_{2}$ is initially unoccupied. Hence we have $d\left(s\right)\leq0$,
as the additional two moves given to Toucher can only decrease the
value of $d\left(s\right)$. Since $\left|D\right|=2$, it follows
that $D\left(T,T_{1}\right)=5\leq5\left(\left|D\right|-1\right)$,
as required. 

Now suppose that Toucher claimed the edge $v_{0}v_{1}$. Again consider
$T_{1}$ obtained by deleting the vertices $v_{0}$ and $v_{1}$,
but in this case we take $C_{1}=C'$. It is easy to see that $T_{1}$
is a reduction of $T$, and similarly we have $d\left(n\right)=2$,
$d\left(l\right)=1$, $d\left(k\right)=0$ and $d\left(s\right)=0$.
Indeed, in this case we have $d\left(s\right)=0$ as Toucher's only
additional move is claiming the edge $v_{0}v_{1}$. Since $\left|D\right|=1$
it follows that $D\left(T,T_{1}\right)=-1\leq5\left(\left|D\right|-1\right)$,
as required.\\

\textbf{Case 5.2. }$m=3$ and $v_{3}$ is initially touched. \\

Since $T$ has at least $6$ vertices, it follows that $v_{3}$ cannot
be a leaf, and hence it is occupied. Suppose that Isolator claims
the edge $v_{1}v_{2}$ on her first move and one of the edges in $\left\{ v_{0}v_{1},v_{2}v_{3}\right\} $
on her second move. If Toucher has claimed the other one of these
edges on one of her first two moves, the process stops after Toucher's
second move. Otherwise Isolator claims the other edge in $\left\{ v_{0}v_{1},v_{2}v_{3}\right\} $,
and the process stops after Toucher's third move. The rest of our
analysis splits into cases based on the number of neighbours of $v_{3}$.\\

\textbf{Case 5.2.1. }$d\left(v_{3}\right)=2$. \\

Let $v_{4}$ be chosen such that $d\left(v_{3}\right)=\left\{ v_{2},v_{4}\right\} $.
Since $v_{3}$ is occupied, it follows that $v_{3}v_{4}\in C$. We
also need to split into cases based on the number of neighbours of
$v_{4}$, and note that $v_{4}$ cannot be a leaf as $T$ has at least
$6$ vertices. \\

\textbf{Case 5.2.1.1. $d\left(v_{4}\right)=2$}.\\

Let $T_{1}$ be the tree obtained deleting the vertices $v_{0}$,
$v_{1}$, $v_{2}$ and $v_{3}$, and let $C_{1}$ be the set of those
edges in $C$ that are not deleted during the process. Since $v_{4}$
is a leaf in $T_{1}$ and occupied in $T$, it follows that $T_{1}$
is a reduction of $T$. 

First suppose that Isolator claimed all three edges in $\left\{ v_{0}v_{1},v_{1}v_{2},v_{2}v_{3}\right\} $.
Since $v_{4}$ is a leaf in $T_{1}$ and the edge $v_{3}v_{4}\in C$
is deleted during the process, it is easy to check that $d\left(n\right)=4$,
$d\left(l\right)=0$, $d\left(k\right)=-2$ and $d\left(s\right)\leq0$,
as the new edges claimed by Toucher cannot increase the value of $d\left(s\right)$.
Hence it follows that $D\left(T,T_{1}\right)\leq10=5\left(\left|D\right|-1\right)$,
which completes the proof of this case as $\left|T\right|>\left|T_{1}\right|$. 

Now suppose that Isolator claimed only two such edges. Hence one of
the edges in $\left\{ v_{0}v_{1},v_{1}v_{2},v_{2}v_{3}\right\} $
must be claimed by Toucher, and this edge is deleted together with
$v_{3}v_{4}\in C$. Hence it is easy to check that $d\left(n\right)=4$,
$d\left(l\right)=0$, $d\left(k\right)=0$ and $d\left(s\right)\leq0$,
and thus $D\left(T,T_{1}\right)\leq4<5\left(\left|D\right|-1\right)$
which completes the proof of this case. \\

\textbf{Case 5.2.1.2. }$d\left(v_{4}\right)\geq3$.\\

Let $N\left(v_{4}\right)\setminus\left\{ v_{3}\right\} =\left\{ u_{1},\dots,u_{a}\right\} $
where $a\geq2$. Let $T_{1}$ be the tree obtained by deleting the
vertices $v_{0}$, $v_{1}$ and $v_{2}$, and by replacing the edge
$v_{4}u_{1}$ with $v_{3}u_{1}$. Let $C_{1}$ be the set of all edges
in $C$ that are also edges in $T_{1}$, and if $v_{4}u_{1}\in C$
the edge $v_{3}u_{1}$ is also added to $C_{1}$. Hence it is easy
to see that $T_{1}$ is a reduction of $T$ by taking $f\left(v_{4}u_{1}\right)=v_{3}u_{1}$.
Note that the only vertices whose degrees are affected during the process
are $v_{3}$ and $v_{4}$, and it is easy to check that $d_{s}\left(v_{3}\right)=0$
and $d_{s}\left(v_{4}\right)=\left(a+1-2\right)-\left(a-2\right)=1$.
In particular, it follows that $d\left(s\right)\leq1$, and we certainly
also have $d\left(n\right)=3$. 

If Isolator claimed claimed all three edges, it follows that $d\left(l\right)=1$
and $d\left(k\right)=-3$. Hence we have $D\left(T,T_{1}\right)\leq10=5\left(\left|D\right|-1\right)$,
as required. If Isolator claimed only two such edges, it follows that
$d\left(l\right)=1$ and $d\left(k\right)=-1$. Hence we have $D\left(T,T_{1}\right)\leq4<5\left(\left|D\right|-1\right)$,
which completes the proof of this case. \\

\textbf{Case 5.2.2. }$d\left(v_{3}\right)\geq3$\@.\\

Let $T_{1}$ be the tree obtained by deleting the vertices $v_{0}$,
$v_{1}$ and $v_{2}$, and note that $T_{1}$ is a reduction of $T$.
Since $v_{3}$ is the only vertex whose degree is affected during the 
process and $d_{s}\left(v_{3}\right)=1$, it follows that $d\left(s\right)\leq1$.
We also have $d\left(n\right)=3$. 

If Isolator claimed all three edges, it follows that $d\left(l\right)=1$
and $d\left(k\right)=-3$. Hence we have $D\left(T,T_{1}\right)=10\leq5\left(\left|D\right|-1\right)$,
as required. If Isolator claimed only two such edges, it follows that
$d\left(l\right)=1$ and $d\left(k\right)=-1$. Hence we have $D\left(T,T_{1}\right)\leq4<5\left(\left|D\right|-1\right)$,
which completes the proof of this case. \\

These sub-cases cover the case when $m=3$ and $v_{3}$ is initially touched
completely. \\

\textbf{Case 5.3. }$m=3$ and $v_{3}$ is initially unoccupied. \\

Since $v_{3}$ is initially unoccupied and $m=3$, it follows that $d\left(v_{3}\right)\geq3$.
Again, suppose that Isolator claims the edge $v_{1}v_{2}$ on her
first move and one of the edges in $\left\{ v_{0}v_{1},v_{2}v_{3}\right\} $
on her second move. If Toucher has occupied the other one of these
edges on her first two moves, then the process stops after the second
move of Toucher. Otherwise Isolator claims the other one of these
edges on her third move and the process tops after the third move
of Toucher. 

First suppose that Isolator claimed all three edges, and let $T_{1}$
be the tree obtained by deleting the vertices $v_{0}$, $v_{1}$ and
$v_{2}$, and take $C_{1}=C$. Then $T_{1}$ is a reduction of $T$,
and the only vertex whose degree is affected during the process is
$v_{3}$. Since $v_{3}\not\in O'$ it follows that $d\left(s\right)\leq0$,
and it is also easy to check that $d\left(n\right)=3$, $d\left(l\right)=1$
and $d\left(k\right)=-3$. Hence we have $D\left(T,T_{1}\right)\leq9<5\left(\left|D\right|-1\right)$,
as required. 

If Isolator claimed only the edges $v_{1}v_{2}$ and $v_{2}v_{3}$,
we consider the same reduction $T_{1}$ as in the previous case, and
we take $C_{1}$ to be those edges in $C$ that are also edges in
$T_{1}$. Again, it is easy to see that $T_{1}$is indeed a reduction
of $T$ as the edge $v_{2}v_{3}$ is occupied by Isolator. It is also
easy to check that $d\left(n\right)=3$, $d\left(l\right)=1$, $d\left(k\right)=-1$
and $d\left(s\right)\leq0$, and hence we have $D\left(T,T_{1}\right)\leq3<5\left(\left|D\right|-1\right)$,
as required. 

Finally suppose that Isolator claimed only the edges $v_{0}v_{1}$
and $v_{1}v_{2}$, and hence Toucher has claimed the edge $v_{2}v_{3}$. 
Let $N\left(v_{3}\right)\setminus\left\{ v_{2}\right\} =\left\{ u_{1},\dots,u_{a}\right\} $
where $a\geq2$. Consider $T_{1}$ obtained by deleting the vertices
$v_{0}$ and $v_{1}$, and replacing the edge $v_{3}u_{1}$ with $v_{2}u_{1}$,
and set $C_{1}$ to be the all edges in $C$ that are also in
$T_{1}$, and if $v_{3}u_{1}\in C$ then $v_{2}u_{1}$ is also added
to $C_{1}$. Since both $v_{2}$ and $v_{3}$ are touched, it follows
that $T_{1}$ is a reduction of $T$ by taking $f_{E}\left(v_{3}u_{1}\right)=v_{2}u_{1}$. 

Note that $v_{2}$ and $v_{3}$ are the only vertices whose degrees
are affected during the process. Since both of them are initially
unoccupied, it follows that $d\left(s\right)\leq0$. It is easy to
check that $d\left(n\right)=2$, $d\left(l\right)=1$ and $d\left(k\right)=-2$.
Hence we have $D\left(T,T_{1}\right)\leq5=5\left(\left|D\right|-1\right)$,
which completes the proof of this case. \\

\textbf{Case 5.4. $m\geq4$}.\\

Suppose that Isolator claims the edge $v_{2}v_{3}$ on her first move.
Suppose that before a given move of Isolator the set of the edges
claimed by Isolator is of the form $\left\{ v_{i}v_{i+1},v_{i+1}v_{i+2},\dots,v_{j}v_{j+1}\right\} $
for some $i\leq2$ and $2\leq j\leq m-1$. If $j<m-1$ and if the
edge $v_{j+1}v_{j+2}$ is still available, Isolator claims this edge
on her move. Otherwise, if $i\geq1$ and the edge $v_{i-1}v_{i}$
is still available, Isolator claims this edge on her move. If neither
of these conditions is satisfied, the process stops. In particular, 
the process always stops after Toucher's move. 

Let $D=\left\{ v_{i}v_{i+1},\dots,v_{j}v_{j+1}\right\} $ be the set
of edges claimed by Isolator at the end of the process. In particular, we
have $\left|D\right|=j-i+1$ and hence the number of isolated vertices
is $j-i$, and the set of isolated vertices is $\left\{ v_{i+1},\dots,v_{j}\right\} $.
Note that we always have $i\leq2$, $2\leq j\leq m-1$ and $j-i\geq1$,
as Toucher cannot claim both $v_{1}v_{2}$ and $v_{3}v_{4}$ on her
first move. We now split into cases, mostly based on the value of
$j$ but sometimes also based on whether $v_{m}$ is touched or $d\left(v_{m}\right)\geq3$.
\\

\textbf{Case 5.4.1. }$j<m-2$.\\

Since $j\neq m-1$ at the end of the process, it follows that Toucher
has claimed the edge $v_{j+1}v_{j+2}$ on one of her moves. Let $T_{1}$
be the tree obtained by deleting the vertices $v_{0},\dots,v_{j+1}$,
and take $C_{1}$ to be the set of those edges in $C$ that are also
edges in $T_{1}$. Since $j<m-2$, it follows that $v_{j+2}$ is a
leaf in $T_{1}$. Hence $T_{1}$ is a reduction of $T$, and it is
easy to see that $d\left(n\right)=j+2$, $d\left(l\right)=0$ and
$d\left(s\right)\leq0$. 

First suppose that $i\in\left\{ 1,2\right\} $. Hence Toucher has
claimed at least one of the edges in $\left\{ v_{0}v_{1},v_{1}v_{2}\right\} $,
and also note that the edge $v_{j+1}v_{j+2}$ claimed by Toucher is
deleted. Since Toucher has claimed exactly $j-i+1$ edges outside
$C'$ as her new moves, it follows that $d\left(k\right)\geq\left(i-j-1\right)+2=1+i-j$.
Hence we have $D\left(T,T_{1}\right)\leq4j-3i-1$, and by using $i\leq2$
and $j-i\geq1$ it follows that $5\left(\left|D\right|-1\right)-D\left(T,T_{1}\right)\ge j-2i+1\geq0$,
as required.

Now suppose that $i=0$. In this case it follows that $d\left(k\right)=-j$,
as $v_{j+1}v_{j+2}$ is the only deleted edge claimed by Toucher.
Hence we have $D\left(T,T_{1}\right)\leq4j+2$, and since $j\geq2$
it follows that $D\left(T,T_{1}\right)\leq5j=5\left(\left|D\right|-1\right)$.
This completes the proof of this case. \\

\textbf{Case 5.4.2. }$j=m-2$ and $d\left(v_{m}\right)=2$.\\

Note that since $d\left(v_{m}\right)=2$, it follows that $v_{m}$
is initially touched by the definition of $m$. Again, since $j\neq m-1$
at the end of the process, it follows that Toucher has claimed the
edge $v_{m-1}v_{m}$ on one of her moves. Let $T_{1}$ be the tree
obtained by deleting the vertices $v_{0},\dots,v_{m-1}$, and take
$C_{1}$ to be the set of those edges in $C$ that are also edges
in $T_{1}$. Since $d\left(v_{m}\right)=2$, it follows that $v_{m}$
is a leaf in $T_{1}$. Hence $T_{1}$ is a reduction of $T$, and
it is easy to check that $d\left(n\right)=m$, $d\left(l\right)=0$
and $d\left(s\right)\leq0$. 

If $i\in\left\{ 1,2\right\} $, it follows that $d\left(k\right)\geq1+i-j=3+i-m$
by using the same argument as in Case 5.4.1. Hence we have $D\left(T,T_{1}\right)\leq4m-3i-9$,
and thus it follows that $5\left(\left|D\right|-1\right)-D\left(T,T_{1}\right)\geq m-2i-1$.
Since $j\geq i+1$ and $i\leq2$ it follows that $m=j+2\geq i+3\geq2i+1$,
as required. If $i=0$, it follows that $d\left(k\right)=-j=2-m$.
Hence we have $D\left(T,T_{1}\right)\leq4m-6$, and since $m\geq4$
it follows that $D\left(T,T_{1}\right)\leq5\left(m-2\right)=5\left(\left|D\right|-1\right)$,
which completes the proof of this case. \\

\textbf{Case 5.4.3. }$j=m-2$ and $d\left(v_{m}\right)\geq3$. \\

Since $j\neq m-1$, it again follows that Toucher must have claimed
the edge $v_{m-1}v_{m}$ on one of her moves. Let $N\left(v_{m}\right)\setminus\left\{ v_{m-1}\right\} =\left\{ u_{1},\dots,u_{a}\right\} $
with $a\geq2$. Consider $T_{1}$ obtained by removing the vertices
$v_{0},\dots,v_{m-2}$, and by replacing the edge $v_{m}u_{1}$ with
$v_{m-1}u_{1}$. Let $C_{1}$ to be the set of those edges in $C$
that are also edges in $T_{1}$, and if $v_{m}u_{1}\in C$ then $v_{m-1}u_{1}$
is also added to $C_{1}$. It is easy to see that $T_{1}$ is a reduction
of $T$ by taking $f\left(v_{m}u_{1}\right)=v_{m-1}u_{1}$ since both
$v_{m-1}$ and $v_{m}$ are touched. 

If $v_{m}$ is initially unoccupied, it is clear that $d\left(s\right)\leq0$.
Otherwise, we have $d_{s}\left(v_{m}\right)=\left(a+1-2\right)-\left(a-2\right)=1$
and $d_{s}\left(v_{m-1}\right)=0$. Hence it follows that $d\left(s\right)\leq1$
in either case. We also certainly have $d\left(n\right)=m-1$ and $d\left(l\right)=1$. 

If $i\in\left\{ 1,2\right\} $, it follows that Toucher has claimed
at least one of the edges $v_{0}v_{1}$ or $v_{1}v_{2}$, and hence
we have $d\left(k\right)\geq i-j=i+2-m$.
Thus it follows that $D\left(T,T_{1}\right)\leq4m-3i-9$, and hence
\[
5\left(\left|D\right|-1\right)-D\left(T,T_{1}\right)\geq5\left(m-2-i\right)-\left(4m-3i-9\right)=m-2i-1.
\]
By using $m=j+2\geq i+3$ and  $i\leq2$ it follows that $D\left(T,T_{1}\right)\leq5\left(\left|D\right|-1\right)$,
as required. 

If $i=0$, we have $d\left(k\right)=-j-1=1-m$.
Hence it follows that $D\left(T,T_{1}\right)\leq4m-6$. Since $m\geq4$,
we have $D\left(T,T_{1}\right)\leq5\left(m-2\right)=5\left(\left|D\right|-1\right)$,
which completes the proof of this case. \\

\textbf{Case 5.4.4. }$j=m-1$ and $d\left(v_{m}\right)\geq3$.
\\

Let $T_{1}$ be the tree obtained by deleting the vertices $v_{0},\dots,v_{m-1}$.
Since Isolator has occupied all the edges $v_{i}v_{i+1},\dots,v_{m-1}v_{m}$,
it follows that $T_{1}$ is a reduction of $T$ regardless whether
$v_{m}$ is touched or not. Since $v_{m}$ is the only vertex whose
degree is affected during the process and $d_{T_{1}}\left(v_{m}\right)=d_{T}\left(v_{m}\right)-1$,
it follows that $d\left(s\right)\leq1$. We also have $d\left(n\right)=m$ and $d\left(l\right)=1$. 

If $i\in\left\{ 1,2\right\} $, it follows that $d\left(k\right)\geq i+1-m$. 
Hence we have $D\left(T,T_{1}\right)\leq4m-3i-5$,
and thus 
\[
5\left(\left|D\right|-1\right)-D\left(T,T_{1}\right)\geq5\left(m-i-1\right)-\left(4m-3i-5\right)=m-2i.
\]
By using $m\geq4$ and $i\leq2$ it follows that $D\left(T,T_{1}\right)\leq5\left(\left|D\right|-1\right)$,
as required. 

If $i=0$, it follows that $d\left(k\right)=-m$.
Hence we have $D\left(T,T_{1}\right)\leq4m-2$, and since $m\geq4$
it follows that $D\left(T,T_{1}\right)\leq5\left(m-1\right)\leq5\left(\left|D\right|-1\right)$,
which completes the proof of this case. \\

\textbf{Case 5.4.5. }$j=m-1$ and $d\left(v_{m}\right)\leq2$. \\

Since $T$ is not a path, we must have $d\left(v_{m}\right)=2$. Since $d\left(v_{m}\right)=2$, the definition of $m$ implies that
$v_{m}$ is touched. 
Hence let $v_{m+1}$ be chosen so that $N\left(v_{m}\right)=\left\{ v_{m-1},v_{m+1}\right\} $.
Since $v_{m}$ is touched and $v_{m-1}v_{m}\not\in C$, it follows
that $v_{m}v_{m+1}\in C$. We split into sub-cases based on the degree
of $v_{m+1}$. First of all, note that $v_{m+1}$ cannot be a leaf
since $T$ is not a path. \\

\textbf{Case 5.4.5.1.} $d\left(v_{m+1}\right)=2$. \\

Let $T_{1}$ be the tree obtained by deleting the vertices $v_{0},\dots,v_{m}$.
Since $v_{m+1}$ is touched in $T$ and leaf in $T_{1}$, it is easy
to see that $T_{1}$ is a reduction of $T$, and we have $d\left(n\right)=m+1$, $d\left(l\right)=0$ 
and $d\left(s\right)=0$. 

If $i\in\left\{ 1,2\right\} $ it follows that $d\left(k\right)\geq-\left(j-i+1\right)+2=i-m+2$, as Toucher
has at least two edges that are deleted during the process, namely $v_{m}v_{m+1}$
and one of $v_{0}v_{1}$ or $v_{1}v_{2}$. Hence we have $D\left(T,T_{1}\right)\leq4m-3i-5$,
and thus 
\[
5\left(\left|D\right|-1\right)-D\left(T,T_{1}\right)\geq5\left(m-i-1\right)-\left(4m-3i-5\right)=m-2i.
\]
Again by using $m\geq4$ and $i\leq2$ it follows that $D\left(T,T_{1}\right)\leq5\left(\left|D\right|-1\right)$,
as required. 

If $i=0$ it follows that $d\left(k\right)\geq-\left(j+1\right)+1=1-m$,
and thus we have $D\left(T,T_{1}\right)\leq4m-2$. Since $m\geq4$,
it follows that $D\left(T,T_{1}\right)\leq5\left(m-1\right)=5\left(\left|D\right|-1\right)$,
which completes the proof of this case. \\

\textbf{Case 5.4.5.2. }$d\left(v_{m+1}\right)\geq3$.\\

Let $N\left(v_{m+1}\right)\setminus\left\{ v_{m}\right\} =\left\{ u_{1},\dots,u_{a}\right\} $
where $a\geq2$, and let $T_{1}$ be the tree obtained by deleting
the vertices $v_{0},\dots,v_{m-1}$ and by replacing the edge $v_{m+1}u_{1}$
with $v_{m}u_{1}$. Let $C_{1}$ be the set of those edges in $C$
that are also edges in $T_{1}$, and if $v_{m+1}u_{1}\in C$ then
$v_{m}u_{1}$ is also added to $C_{1}$. Then $T_{1}$ is a reduction
of $T$ by taking $f_{E}\left(v_{m+1}u_{1}\right)=v_{m}u_{1}$. It
is easy to see that $d\left(n\right)=m$ and $d\left(l\right)=1$. Note that $v_{m}$ and $v_{m+1}$
are the only vertices whose degrees are affected during the process.
Since $d_{s}\left(v_{m+1}\right)\leq1$ and $d_{s}\left(v_{m}\right)=0$,
it follows that $d\left(s\right)\leq1$. 

If $i\in\left\{ 1,2\right\} $, it follows that $d\left(k\right)\geq-\left(j-i+1\right)+1=i+1-m$. Hence we have
$D\left(T,T_{1}\right)\leq4m-3i-5$. Since $m\geq4\geq2i$, it follows
that 
\[
5\left(\left|D\right|-1\right)-D\left(T,T_{1}\right)\geq5\left(m-i-1\right)-\left(4m-3i-5\right)=m-2i\geq0,
\]
as required.

If $i=0$ it follows that $d\left(k\right)=-m$,
and hence we have $D\left(T,T_{1}\right)\leq4m-2$. Since $m\geq4$,
it follows that $D\left(T,T_{1}\right)\leq5\left(m-1\right)\leq5\left(\left|D\right|-1\right)$,
which completes the proof of this case and the proof of Case 5. $\hfill\square$\\

Suppose that $T$ does not contain any configurations described in
Cases 1-5, and let $v_{0},\dots,v_{m}$ be a maximal path of vertices
in $T$ for which $v_{i}$ is an unoccupied vertex of degree $2$
for all $1\leq i\leq m-1$, and for which we have $v_{i}\in N\left(v_{i-1}\right)$
for all $1\leq i\leq m$. Since $T$ does not contain any configurations
described in Cases 1-5, it follows that $v_{0}$ and $v_{m}$ are
also unoccupied, and the maximality assumption implies that we must
have $d\left(v_{0}\right)\geq3$ and $d\left(v_{m}\right)\geq3$.
In our final case we suppose that there exists a such a path with
$m\geq3$. \\

\textbf{Case 6. }There exists $m\geq3$ and unoccupied vertices $v_{0},\dots,v_{m}$
satisfying $v_{i}\in N\left(v_{i-1}\right)$ for all $1\leq i\leq m$,
$d\left(v_{i}\right)=2$ for all $1\leq i\leq m-1$, $d\left(v_{0}\right)\geq3$
and $d\left(v_{m}\right)\geq3$. \\

Suppose that Isolator claims the edge $v_{1}v_{2}$ on her first move.
Suppose that before a given move of Isolator the set of edges claimed
by Isolator is of the form $\left\{ v_{i}v_{i+1},v_{i+1}v_{i+2},\dots,v_{j}v_{j+1}\right\} $
for some $i\in\left\{ 0,1\right\} $ and $j\leq m-1$. If $j<m-1$
and if the edge $v_{j+1}v_{j+2}$ is still available, Isolator claims
this edge on her move. Otherwise, if $i=1$ and the edge $v_{0}v_{1}$
is still available, Isolator claims this edge on her move. If neither
of these conditions is satisfied, the process stops.  

Let $\left\{ v_{i}v_{i+1},\dots,v_{j}v_{j+1}\right\} $ be the set
of edges claimed by Isolator at the end of such process. Note that
again we have $\left|D\right|=j-i+1$, and we also have $i\in\left\{ 0,1\right\} $,
$1\leq j\le m-1$ and $j-i\geq1$ since Toucher cannot claim both
edges $v_{0}v_{1}$ and $v_{2}v_{3}$ on her first move. We again
split into several cases based on the values of $i$ and $j$. \\

\textbf{Case 6.1. }$i=0$ and $j=m-1$. \\

Let $S$ be the graph obtained by deleting the vertices $v_{1},\dots,v_{m-1}$.
It is easy to see that $S$ consists of two connected components
both of which are trees. Let $a$ and $b$ be leaves chosen from distinct
connected components, let $T_{1}$ be the tree obtained by adding
the edge $ab$ to the graph $S$  as demonstrated in Figure 4, and set $C_{1}=C\cup\left\{ ab\right\} $.
Note that the set of leaves in $T_{1}$ is exactly $L\setminus\left\{ a,b\right\} $
since $d_{T_{1}}\left(v_{0}\right)\geq3-1=2$ and $d_{T_{1}}\left(v_{m}\right)\geq3-1=2$.
Since $ab\in C_{1}$, it follows that $T_{1}$ is a reduction of $T$. 

\begin{figure}
\centering
\caption{Construction of $T_{1}$. Green edges are edges claimed by Toucher and they are deleted during the process and replaced with the red dotted edge.}
\begin{tikzpicture}

\node[main node] (n1) at (1,6){}; 
\node[every node] (n2) at (2,8){};
\node[text width=0.5cm] at (1.9, 8){\ldots};
\node[main node] (n3) at (3.5,8.5){}; 
\node[text width=0.5cm] at (3.65, 8.9){$a$};
\node[every node](n4) at (1,5){};
\node[text width=0.5cm] at (0.9,5){\ldots};
\node[main node](n6) at (2,5){};
\node[text width=0.5cm] at (2.15, 4.6){$v_0$};
\node[main node](n7) at (3,6){};
\node[text width=0.5cm] at (3.15, 6.4){$v_1$};
\node[main node](n8) at (4,5){};
\node[text width=0.5cm] at (4.15, 4.6){$v_2$};
\node[every node](n9) at (4.8,5){};
\node[text width=0.5cm] at (5.025,5){\ldots};
\node[every node](n18) at (5.2,5){};
\node[main node] (n10) at (6,5){};
\node[text width=0.5cm] at (6.15, 4.6){$v_{m-2}$};
\node[main node] (n11) at (7,6){};
\node[text width=0.5cm] at (7.15, 6.4){$v_{m-1}$};
\node[main node] (n12) at (8,5){};
\node[text width=0.5cm] at (8.15, 4.6){$v_m$};
\node[every node] (n14) at (9,5){};
\node[text width=0.5cm] at (9.2,5){\ldots};
\node[main node] (n15) at (9,6){};
\node[every node](n16) at (8,8){};
\node[text width=0.5cm] at (8.2, 8){\ldots};
\node[main node] (n17) at (6.5,8.5){};
\node[text width=0.5cm] at (6.65, 8.9){$b$};

\draw (n6) -- (n1);
\draw (n6) -- (n4);
\draw (n1) -- (n2);
\draw (n2) -- (n3);
\draw[style={draw=green}] (n6) -- (n7);
\draw[style={draw=green}] (n7) -- (n8);
\draw[style={draw=green}] (n8) -- (n9);
\draw[style={draw=green}] (n18) -- (n10);
\draw[style={draw=green}] (n10) -- (n11);
\draw[style={draw=green}] (n11) -- (n12);
\draw (n12) -- (n14);
\draw (n12) -- (n15);
\draw (n15) -- (n16);
\draw (n16) -- (n17);
\draw[style={draw=red},dotted] (n17) -- (n3);
\end{tikzpicture}
\end{figure}
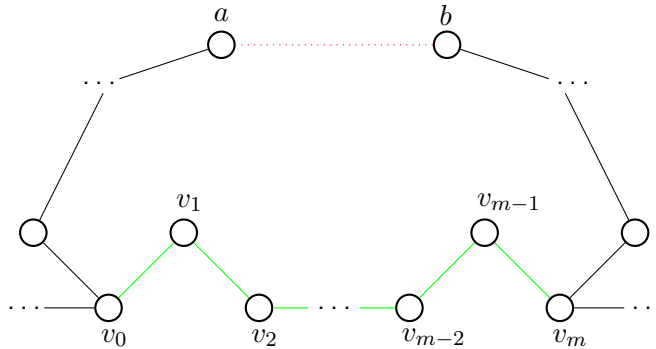

Since $v_{0}$ and $v_{m}$ are initially unoccupied, it is easy to
see that $d\left(s\right)\leq0$. Note that $d\left(k\right)=-m-1$,
as Toucher has claimed $m$ new edges and the edge $ab$ is assigned
to Toucher. Finally it is clear that we have $d\left(n\right)=m-1$
and $d\left(l\right)=2$. Since $m\geq3$, it follows that $D\left(T,T_{1}\right)\leq4m-4\leq5\left(m-1\right)=5\left(\left|D\right|-1\right)$,
which completes the proof of this case. \\

\textbf{Case 6.2. }$i=0$ and $j<m-1$. \\

Since $j<m-1$, it follows that Toucher has claimed the edge $v_{j+1}v_{j+2}$
on one of her moves. Let $S$ be the graph obtained by deleting the
vertices $v_{1},\dots,v_{j+1}$, and let $a$ be a leaf in the component
of $S$ containing $v_{0}$. Consider the tree $T_{1}$ obtained by
adding the edge $av_{j+2}$ to $S$, and define $C_{1}$ by setting
$C_{1}=\left(C\cup\left\{ av_{j+2}\right\} \right)\setminus\left\{ v_{j+1}v_{j+2}\right\} $.
Note that $T_{1}$ is a reduction of $T$ as both $a$ and $v_{j+2}$
are touched vertices in both $T$ and $T_{1}$. Finally note that
$d_{T_{1}}\left(v_{0}\right)\geq3-1=2$, and hence $v_{0}$ is not
a leaf in $T_{1}$. 

Note that the only vertices whose degrees are affected during the
process are $a$, $v_{0}$ and $v_{j+2}$. Note that $d_{s}\left(a\right)=0$,
and since $v_{0},\,v_{j+2}\not\in O'$ it follows that $d\left(s\right)\leq0$.
It is easy to check that we also have $d\left(n\right)=j+1$, $d\left(l\right)=1$
and $d\left(k\right)=-\left(j+1\right)$. Thus we have $D\left(T,T_{1}\right)\leq4j+1$,
and since $j\geq1$ it follows that $D\left(T,T_{1}\right)\leq5j=5\left(\left|D\right|-1\right)$,
which completes the proof of this case. \\

\textbf{Case 6.3. }$i=1$ and $j=m-1$. \\

Note that the case $\left(i,j\right)=\left(1,m-1\right)$ is equivalent
to the case $\left(i,j\right)=\left(0,m-2\right)$, which is covered
in the previous case. \\

\textbf{Case 6.4. }$i=1$ and $j<m-1$.\\

Since $i=1$ and $j<m-1$, it follows that Toucher has claimed the
edges $v_{0}v_{1}$ and $v_{j+1}v_{j+2}$. Let $T_{1}$ be the tree
obtained by deleting the vertices $v_{1},\dots,v_{j+1}$ and by adding
the edge $v_{0}v_{j+2}$, and set $C_{1}=\left(C\cup\left\{ v_{0}v_{j+2}\right\} \right)\setminus\left\{ v_{0}v_{1},v_{j+1}v_{j+2}\right\} $.
Note that $T_{1}$ is a reduction of $T$ as both $v_{0}$ and $v_{j+2}$
are touched before and after the reduction. 

Note that the degree of any vertex that is not deleted is not affected
during the process. Since $v_{0}\in O'\setminus O_{1}$ and $d_{T_{1}}\left(v_{0}\right)=d_{T}\left(v_{0}\right)\geq3$,
it follows that $d_{s}\left(v_{0}\right)\leq-1$. Hence we must have
$d\left(s\right)\leq-1$. Since the edges $v_{0}v_{1}$ and $v_{j+1}v_{j+2}$
claimed by Toucher are deleted in the process and the edge $v_{0}v_{j+2}$
is given to Toucher, it follows that $d\left(k\right)=1-j$, and it
is easy to see that $d\left(n\right)=j+1$ and $d\left(l\right)=0$.
Hence we have $D\left(T,T_{1}\right)\leq4j-3$. Since 
$j\geq i+1\geq2$, it follows that $D\left(T,T_{1}\right)\leq5\left(j-1\right)=5\left(\left|D\right|-1\right)$,
which completes the proof of Case 6. $\hfill\square$\\

Let $C=\left\{ e_{1},\dots,e_{k}\right\} $ be the set of edges claimed
by Toucher at the start of the game. Our aim is to prove that if $T$
together with this particular collection $C$ does not contain any
of the configurations described in Cases 1-6, then it follows that
$S\left(T\right)\leq0$. For each $1\leq i\leq k$ let $e_{i}=a_{i}b_{i}$ 
and let $d_{i}=d_{T}\left(a_{i}\right)+d_{T}\left(b_{i}\right)-2$. 

We say that a graph $T$ is a \textit{forest} if every connected component
of $T$ is a tree. We define a sequence of forests $T_{0},\dots,T_{k}$
and collections of edges $C_{0},\dots,C_{k}$ as follows. First of
all, we set $T_{0}=T$ and $C_{0}=C=\left\{ e_{1},\dots,e_{k}\right\} $,
and at every stage we will have $C_{i}=\left\{ e_{i+1},\dots,e_{k}\right\} $. 

Given $T_{i}$ and $C_{i}$, let $X$ be the connected component of
$T_{i}$ containing the edge $e_{i}$, and note that $X$ is a tree
since $T_{i}$ is a forest. Let $Y$ be the forest consisting of $d_{i}$
trees obtained by removing the vertices $a_{i}$ and $b_{i}$ and
the edge $a_{i}b_{i}$, and by adding one new vertex to each connected
component $S$ of $Y$ joined by an edge to the vertex of $S$
that was neighbour of $a_{i}$ or $b_{i}$. Note that such a vertex
always exists in each connected component, and such vertex is also
unique since $X$ is a tree. Finally we set $T_{i+1}$ to be the union
of $Y$ and all the components of $T_{i}$ apart from $X$. One stage
of the process is illustrated in Figure 5. 

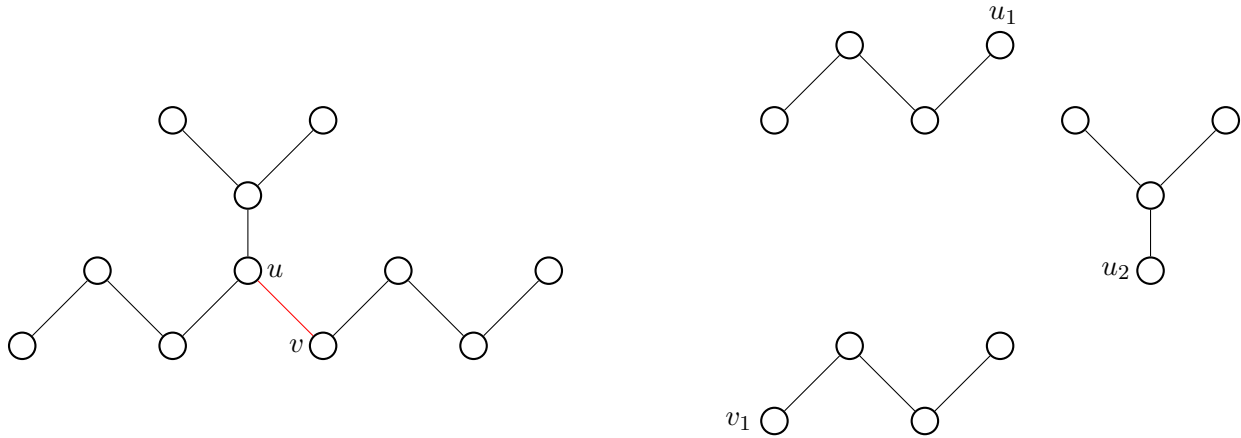
\begin{figure}
\centering
\caption{Illustration of one stage of the process. The edge $uv\in C_{i}$
is deleted, and since $d_{i}=3$ the connected component on left
splits into $3$ trees on right. }
\begin{tikzpicture}
\node[main node] (n1) at (1,5){};
\node[main node] (n2) at (2,6){};
\node[main node] (n3) at (3,5){};
\node[main node] (n4) at (4,6){};
\node[text width=0.5cm] at (4.5,6){$u$};
\node[main node] (n5) at (5,5){};
\node[text width=0.5cm] at (4.8,5){$v$};
\node[main node] (n6) at (6,6){};
\node[main node] (n7) at (7,5){};
\node[main node] (n8) at (8,6){};
\node[main node] (n9) at (4,7){};
\node[main node] (n10) at (3,8){};
\node[main node] (n11) at (5,8){};

\draw (n1) -- (n2);
\draw (n2) -- (n3);
\draw (n3) -- (n4);
\draw (n4) -- (n9);
\draw (n5) -- (n6);
\draw (n6) -- (n7);
\draw (n7) -- (n8);
\draw[style={draw=red}] (n4) -- (n5);
\draw (n9) -- (n10);
\draw (n9) -- (n11);

\node[main node] (m1) at (11,8){};
\node[main node] (m2) at (12,9){};
\node[main node] (m3) at (13,8){};
\node[text width=0.5cm] at (14.1,9.4){$u_1$};
\node[main node] (m4) at (14,9){};
\node[text width=0.5cm] at (15.6,6){$u_2$};
\node[main node] (m5) at (16,6){};
\node[main node] (m6) at (16,7){};
\node[main node] (m7) at (15,8){};
\node[main node] (m8) at (17,8){};
\node[main node] (m9) at (11,4){};
\node[text width=0.5cm] at (10.6,4){$v_1$};
\node[main node](m10) at (12,5){};
\node[main node](m11) at (13,4){};
\node[main node](m12) at (14,5){};
\draw (m1) -- (m2);
\draw (m2) -- (m3);
\draw (m3) -- (m4);
\draw (m5) -- (m6);
\draw (m6) -- (m7);
\draw (m6) -- (m8);
\draw (m9) -- (m10);
\draw (m10) -- (m11);
\draw (m11) -- (m12);
\end{tikzpicture}
\end{figure}

Note that by Claims 2, 3 and 4 it follows that all $a_{1},\dots,a_{k},b_{1},\dots,b_{k}$
are distinct vertices, none of them is a leaf in any $T_{i}$ and any two
such vertices are neighbours if and only if they are $a_{j}$ and
$b_{j}$ for some $j$. Also by Claim 1 it follows that every connected
component in $T_{k}$ has at least $4$ vertices. 

Note that during the $i^{th}$ step of the process, the number of
connected components increases by $d_{i}-1$, as one connected component
splits into $d_{i}$ connected components. Hence the number of connected
components in $T_{k}$ is 
\begin{equation}
D=1+\sum_{i=1}^{k}\left(d_{i}-1\right)=1-k+\sum_{i=1}^{k}d_{i}.\label{eq:M1}
\end{equation}

Let $n_{1},\dots,n_{D}$ be the number of vertices in each connected
component and let $l_{1},\dots,l_{D}$ be the number of leaves in
each connected component. Note that on the $i^{th}$ stage the number
of vertices increases by $d_{i}-2$, as we delete the vertices $a_{i}$
and $b_{i}$ and add $d_{i}$ new vertices that are leaves. Hence
we have 
\begin{equation}
\sum_{i=1}^{D}n_{i}=n+\sum_{i=1}^{k}\left(d_{i}-2\right)=n-2k+\sum_{i=1}^{k}d_{i}.\label{eq:M2}
\end{equation}
Since none of the vertices $a_{1},\dots,a_{k},b_{1},\dots,b_{k}$
is a leaf at any stage of the process before they are deleted, it
follows that the number of leaves increases by $d_{i}$ on the $i^{th}$
stage. Hence we have 
\begin{equation}
\sum_{i=1}^{D}l_{i}=l+\sum_{i=1}^{k}d_{i}.\label{eq:M3}
\end{equation}

Let $S$ be a connected component in $T_{k}$. Note that if $S$ contains
a vertex of degree $2$ whose neighbour is a leaf, then we can backtrack
the process to find a vertex of degree $2$ in $T$ whose neighbour
is a touched vertex, which contradicts Case 5. Hence we may assume
that no vertex of degree $2$ in $S$ has a leaf as a neighbour. 

If $S$ contains two vertices of degree $2$ that are neighbours,
it follows that there exists a path of vertices $v_{0},\dots,v_{t}$
in $S$ for some $t\geq3$ with $d\left(v_{0}\right)\geq3$, $d\left(v_{t+1}\right)\geq3$
and $d\left(v_{i}\right)=2$ for all $1\leq i\le t$. Since none of
these is a leaf in $S$, it follows that these vertices also formed
a path satisfying the same condition in $T$, and all of these vertices
are unoccupied in $T$. This contradicts Case 6. 

Hence in every connected component there is no vertex of degree $2$
whose neighbour is a leaf or another vertex of degree $2$. Since
each connected component is a tree with at least $4$ vertices, Lemma
\ref{lem:Lemma5} implies that $3l_{i}\geq n_{i}+5$. Adding these
inequalities for all $i\in\left\{ 1,\dots,D\right\} $, and using
(\ref{eq:M1}), (\ref{eq:M2}) and (\ref{eq:M3}) it follows that
\[
3\left(l+\sum_{i=1}^{k}d_{i}\right)\geq n-2k+\sum_{i=1}^{k}d_{i}+5-5k+5\sum_{i=1}^{k}d_{i}.
\]
This can be rearranged to 
\begin{equation}
3l+3k\geq n+5+3\sum_{i=1}^{k}d_{i}-4k.\label{eq:M4}
\end{equation}

Note that $O\left(T\right)=\left\{ a_{1},\dots,a_{k},b_{1},\dots,b_{k}\right\} $,
and hence it follows that 
\[
\sum_{v\in O\left(T\right)}\left(d\left(v\right)-2\right)=\sum_{i=1}^{k}\left(d\left(a_{i}\right)+d\left(b_{i}\right)-4\right)=\sum_{i=1}^{k}\left(d_{i}-2\right)=-2k+\sum_{i=1}^{k}d_{i}.
\]
Hence (\ref{eq:M4}) can be written as 
\begin{equation}
3l+3k\ge n+5+\sum_{v\in O\left(T\right)}\left(d\left(v\right)-2\right)+2\sum_{i=1}^{k}d_{i}-2k.\label{eq:M4.5}
\end{equation}

Since none of $a_{i}$ or $b_{i}$ is a leaf, it follows that $d_{i}=d\left(a_{i}\right)+d\left(b_{i}\right)-2\geq2$.
Hence we have $2\sum_{i=1}^{k}d_{i}-2k\geq2k\geq0$. In particular,
(\ref{eq:M4.5}) implies that 
\[
n+7-3k-3l+\sum_{v\in O\left(T\right)}\left(d\left(v\right)-2\right)\leq2,
\]
and thus we must have $S\left(T\right)\leq\left\lfloor \frac{2}{5}\right\rfloor =0$.
Hence the claim follows trivially as we always have $\alpha\left(T,C,L\right)\geq0$.
Since we always have $\sum_{v\in O\left(T\right)}\left(d\left(v\right)-2\right)\geq0$,
the second part follows immediately. This completes the proof of Lemma
\ref{lem:Lemma 4}. 
\end{proof}

We are now ready to prove Theorem \ref{thm:1}. 

\begin{proof} [Proof of Theorem \ref{thm:1}] Let $T$
be a tree with $n\geq3$ vertices. Suppose that during the first phase of
the game Isolator follows the strategy specified in Lemma \ref{lem:Lemma 3},
and let $r$ be the number of edges claimed by her during the first
phase of the game. Let $T'$, $C'$ and $X'=L'$ be given as in Lemma
\ref{lem:Lemma 3}. Since $\left|I\right|=r$, it follows that $\alpha\left(T\right)\geq r+\alpha\left(T',C',L'\right)$.
Since the second phase is equivalent to the delayed game $F\left(T',C',L'\right)$,
Lemma \ref{lem:Lemma5} implies that 
\[
\alpha\left(T',C',L'\right)\geq\left\lfloor \frac{\left|T'\right|-3\left|C'\right|-3\left|L'\right|+7}{5}\right\rfloor .
\]
Since Lemma \ref{lem:Lemma 3} guarantees that 
\[
\left|T'\right|-3\left|C'\right|-3\left|L'\right|\geq n-5r-4,
\]
it follows that 
\[
\alpha\left(T\right)\geq r+\left\lfloor \frac{n-5r-4+7}{5}\right\rfloor =\left\lfloor \frac{n+3}{5}\right\rfloor ,
\]
which completes the proof of Theorem (\ref{thm:1}). 
\end{proof}

There are many questions that are open concerning the value of $u\left(G\right)$
for general $G$. Dowden, Kang, Mikala\v{c}ki and Stojakovi\'{c} \cite{key-7}
gave bounds for $u(G)$ that depend on the degree sequence of the
graph $G$. In particular, they concluded that if the minimum degree
of $G$ is at least $4$ then $u\left(G\right)=0$. They also proved
that there exists a $3$-regular graph with $u\left(G\right)>0$,
and they proved that for all $3$-regular graphs we have $u\left(G\right)\leq\frac{n}{8}$.
It would be interesting to know what is the largest possible proportion
of untouched vertices in a connected $3$-regular graph.

\end{document}